\pgfplotsset{compat=newest}
\newcommand{\TheAuthors}{N. Alger, V. Rao, A. Myers, T. Bui-Thanh, O. Ghattas}
\newcommand{\ShortTitle}{Adaptive product-convolution approximation}
\newcommand{\LongTitle}{Scalable matrix-free adaptive product-convolution approximation for locally translation-invariant operators}
\newcommand{\nor}[1]{\left\| #1 \right\|}
\newcommand{\ix}[1]{\left[ #1 \right]}
\newcommand*\inds[1]{\mathbf{\bm{#1}}}
\newcommand{\nadj}{q}
\DeclareMathOperator{\corners}{corners}
\DeclareMathOperator{\supp}{supp}
\DeclareMathOperator{\nbrs}{nbrs}
\DeclareMathOperator{\leaves}{leaves}
\DeclareMathOperator{\children}{children}
\DeclareMathOperator{\cells}{cells}
\DeclareMathOperator{\flip}{flip}
\DeclareMathOperator{\cond}{cond}
\headers{\ShortTitle}{\TheAuthors}
\title{{\LongTitle}\thanks{This work was partially supported by the following grants: AFOSR
FA9550-17-1-0190, NSF ACI-1550593 and CBET-1508713, and DOE
DE-SC0010518, DE-SC0009286, and DE-SC0019393. Aaron Myers and Tan Bui-Thanh are partially supported by the DOE grant DE-SC0018147. Vishwas Rao is partially supported by the U.S. Department of Energy, Office of Science, Advanced Scientific Computing Research Program
under contract DE-AC02-06CH11357.}}
\author{
  Nick Alger\thanks{Institute for Computational Engineering and Sciences, The University of Texas at Austin, Austin, TX, USA
    (\email{nalger@ices.utexas.edu}, \email{aaron@ices.utexas.edu}, \email{tanbui@ices.utexas.edu}, \email{omar@ices.utexas.edu}).}
  \and
  Vishwas Rao\thanks{Mathematics and Computer Science Division, Argonne National Laboratory, Lemont, IL, USA (\email{vhebbur@anl.gov}).}
  \and
  Aaron Myers\footnotemark[2]
  \and
  Tan Bui-Thanh\footnotemark[2] \thanks{Department of Aerospace Engineering and Engineering Mechanics, and Institute for Computational Engineering and Sciences, The University of Texas at Austin, Austin, TX, USA.}
  \and
  Omar Ghattas\footnotemark[2] \thanks{Departments of Geological Sciences and Mechanical Engineering, The University of Texas at Austin, Austin, TX, USA.}
}
\begin{document}

\maketitle

\begin{abstract}
We present an adaptive grid matrix-free operator approximation scheme based on a ``product-convolution'' interpolation of convolution operators. This scheme is appropriate for operators that are locally translation-invariant, even if these operators are high-rank or full-rank. Such operators arise in Schur complement methods for solving partial differential equations (PDEs), as Hessians in PDE-constrained optimization and inverse problems, as integral operators, as covariance operators, and as Dirichlet-to-Neumann maps. Constructing the approximation requires computing the impulse responses of the operator to point sources centered on nodes in an adaptively refined grid of sample points. A randomized a-posteriori error estimator drives the adaptivity. Once constructed, the approximation can be efficiently applied to vectors using the fast Fourier transform. The approximation can be efficiently converted to hierarchical matrix ($H$-matrix) format, then inverted or factorized using scalable $H$-matrix arithmetic. The quality of the approximation degrades gracefully as fewer sample points are used, allowing cheap lower quality approximations to be used as preconditioners. This yields an automated method to construct preconditioners for locally translation-invariant Schur complements. We directly address issues related to boundaries and prove that our scheme eliminates boundary artifacts. We test the scheme on a spatially varying blurring kernel, on the non-local component of an interface Schur complement for the Poisson operator, and on the data misfit Hessian for an advection dominated advection-diffusion inverse problem. Numerical results show that the scheme outperforms existing methods.
\end{abstract}

\begin{keywords}
convolution, operator approximation, hierarchical matrix, H-matrix,  PDE-constrained optimization, inverse problems, data scalability,  matrix-free, preconditioning, Hessian, Schur complement
\end{keywords}

\begin{AMS}
  41A05, 41A35, 42A61, 42A85, 47A58, 49K20, 65F08, 65J22, 65N21, 65T50, 94A12
\end{AMS}

\section{Introduction}
\label{sec:intro0}

We present an adaptive product-convolution scheme for approximating locally translation-invariant operators. That is, operators $A:l^2(\inds{\Omega})\rightarrow l^2(\inds{\Omega})$ satisfying
\begin{equation}
\label{eq:translation_invariance}
A\ix{y,x} \approx A\ix{y-x+p, p}
\end{equation}
whenever $x$ is not too far from $p$ (see Figure \ref{fig:translation_invariance}). Here we consider the case in which $\inds{\Omega}$ is a box\footnote{One can use our scheme in more general settings by mapping the domain to a box and interpolating functions onto a regular mesh.} in $\mathbb{Z}^d$. Our scheme is well-suited for approximating or preconditioning operators that arise in Schur complement techniques \cite{le1997non,saad1999distributed} for solving partial differential equations (PDEs), reduced Hessians in PDE-constrained optimization and inverse problems, integral operators, covariance operators with spatially varying kernels, and Dirichlet-to-Neumann maps or other Poincaré–Steklov operators in multiphysics problems. These operators are typically dense and implicitly defined, and often do not admit a global low-rank approximation, making them difficult to approximate with standard techniques.

\begin{figure}
\center
\includegraphics[scale=0.4]{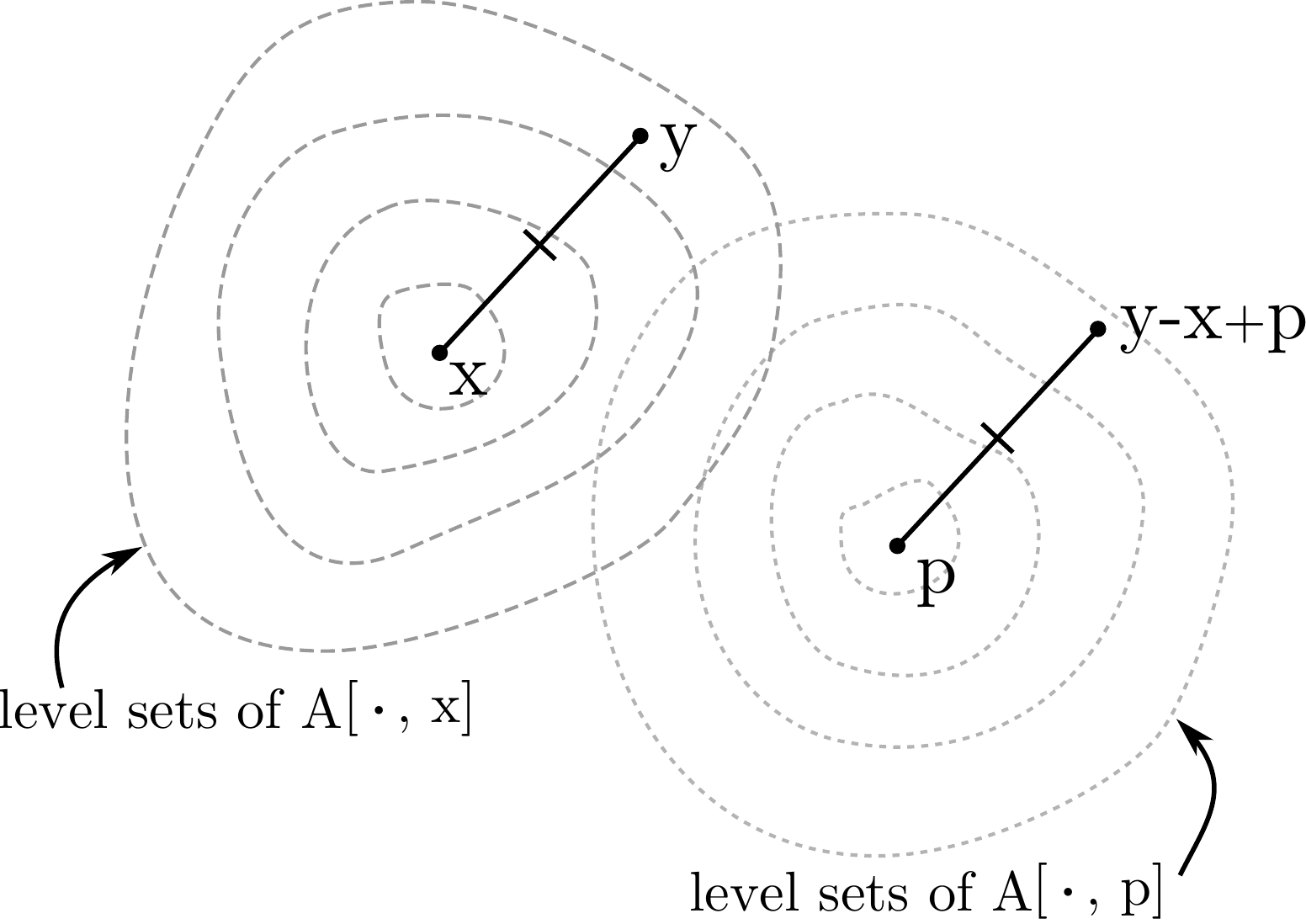}
\caption{Our product-convolution scheme is suitable for operators that are locally translation-invariant. That is, operators for which the impulse response at $y$ to a point source at $x$ is similar to the impulse response at $y-x+p$ given a point source of equal magnitude at $p$ if $x$ is near $p$.}
\label{fig:translation_invariance}
\end{figure}

Let $\varphi_p$ be the impulse response of $A$ at $p$, i.e., the function created by applying $A$ to a point source centered at point $p$, then translating the result to recenter it at $0$:
\begin{equation}
\label{eq:varphi_p}
\varphi_p\ix{z} = \left(A \delta_{p}\right)\ix{z+p}, \quad z \in \inds{\Omega}-p.
\end{equation}
By ``point source,'' $\delta_p$, we mean the Kronecker delta that contains the value $1$ at location $p$ and zeros elsewhere. If $A$ were translation-invariant (i.e., if \eqref{eq:translation_invariance} held with equality for all $x$, $y$), then $A$ would be the convolution operator $A:f \mapsto \varphi_p \ast f$. To approximate operators that are only locally translation-invariant, we patch together a collection of convolution operators, each of which well-approximates $A$ locally. Our approximation of $A$, denoted $\widetilde{A}$, takes the following form:
\begin{equation}
\label{eq:conv_op_approx}
Af \approx \widetilde{A} f \coloneqq \sum_{k=1}^r \varphi_k^E \ast (w_k \cdot f),
\end{equation}
where the $w_k$ are locally supported weighting functions that overlap and form a partition of unity, `$\cdot$' denotes pointwise multiplication of functions, $\ast$ denotes convolution (see Section \ref{sec:setting_notation} for more details on notation), and the functions $\varphi_k^E$ are modified\footnote{To address issues with boundary artifacts, we construct $\varphi_k^E$ by extending the function $\varphi_{p_k}$ outside of $\inds{\Omega}-p_k$ using information from neighboring functions, $\varphi_{p_j}$ (more on this in Section \ref{sec:boundary_extension}).} versions of the (translated, recentered) impulse responses $\varphi_{p_k}$ associated with a collection of sample points, $p_k$. Each point $p_k$ is contained within the support of the associated weighting function $w_k$. 

The basic form of \eqref{eq:conv_op_approx} is known as a product-convolution approximation, and is well-established in the literature (see Section \ref{sec:convolution_interpolation_review}). Here we improve upon existing schemes by:
\begin{itemize} 
	\item Adaptively and automatically choosing the sample points $p_k$.
	\item Addressing issues related to boundaries. 
\end{itemize}
In Section \ref{sec:convolution_operator_approx} we derive our scheme, explain how we choose $p_k$, and detail the process for constructing $w_k$ and $\varphi_k^E$. In Section \ref{sec:H_matrix} we detail how $\widetilde{A}$ can be used once constructed, including how to efficiently convert it to hierarchical matrix ($H$-matrix) format. In Section \ref{sec:error_analysis} we perform an a-priori error analysis of our scheme. We demonstrate our scheme numerically in Section \ref{sec:numerical_examples} and give concluding remarks in Section \ref{sec:conclusion}. In the remainder of this section we summarize our results (Section \ref{sec:overview}), review existing work (Section \ref{sec:literature_review}), and define our setting and notation (Section \ref{sec:setting_notation})

\subsection{Overview of results}
\label{sec:overview}
The scheme we present is matrix-free in the sense that constructing $\widetilde{A}$ only requires the ability to apply $A$ and its adjoint, $A^*$, to vectors. Access to the matrix representation of $A$ is not needed. Once constructed, we can compute any matrix entry of $\widetilde{A}$ in $O(1)$ work. We can apply $\widetilde{A}$ and $\widetilde{A}^*$ to vectors in nearly linear work using the fast Fourier transform (FFT). Blocks of $\widetilde{A}$ and $\widetilde{A}^*$ can be applied to vectors in work that is nearly linear in the size of the block. 

Often the ultimate goal is to solve linear systems with $A$ as the coefficient operator. Krylov methods can be used to solve these systems \cite{CalvettiBryanLothar00}. However, the convergence of Krylov methods depends heavily on the spectral structure of the coefficient operator, leading to slow convergence when $A$ is ill-conditioned. To address this, we explain how $\widetilde{A}$ can be efficiently converted to $H$-matrix format. Once in $H$-matrix format, $\widetilde{A}$ can be efficiently factorized or inverted using $H$-matrix arithmetic, then used as a preconditioner. Alternatively, one can build circulant preconditioners from $\widetilde{A}$ \cite{ChanNg96,NagyOleary98}.

We choose the sample points, $p_k$, in an adaptive grid: in regions where the error is large, we refine the grid. The effect of this refinement process is to place more sample points in regions where $A$ is less translation-invariant, and fewer sample points in regions where $A$ is more translation-invariant. The adaptivity is performed using a randomized a-posteriori error estimator.

Boundaries introduce two difficulties for product-convolution schemes:
\begin{enumerate}
\item \textbf{Boundary artifacts:} The impulse response associated with $p_k$ is naturally defined on $\inds{\Omega}-p_k$, but the product-convolution scheme \eqref{eq:conv_op_approx} requires it to be defined on a larger set. The three standard extension techniques---extending the impulse response by zero, reflecting it across the boundary, or replicating it periodically---all create boundary artifacts wherever artificial data are used in place of undefined data.
\item \textbf{Boundary effects:} The underlying operator may fail to be translation-invariant near boundaries due to boundary conditions or other physically meaningful effects.
\end{enumerate}
To overcome 1, we extend the support of the impulse responses using information from neighboring impulse responses. To overcome 2, we use anisotropic adaptivity. Our adaptive refinement scheme senses the coordinate direction in which $A$ is least translation-invariant within a cell, and preferentially subdivides the cell in that direction. This allows the scheme to efficiently approximate operators that are not translation-invariant in directions perpendicular to boundaries, but are translation-invariant in directions parallel to boundaries. Boundary effects due to boundary conditions typically exhibit this direction-dependent form of translation-invariance (regardless of the type of boundary condition). 

In Theorem \ref{thm:a_priori_convolution_error}, we prove that the error in our scheme is controlled by the local failure of translation-invariance in $A$. This, together with adaptivity, implies convergence: our scheme will continue to add new sample points until it achieves the desired error tolerance. The more translation-invariant $A$ is, the fewer sample points will be used. Additionally, Theorem \ref{thm:a_priori_convolution_error} implies that our approximation scheme will not introduce boundary artifacts. Without our impulse response extension procedure, the bound in Theorem \ref{thm:a_priori_convolution_error} would fail near the boundary.

We demonstrate the scheme on a spatially varying blur operator, on the non-local component of an interface Schur complement for the Poisson operator, and on the data misfit Hessian for an advection dominated advection-diffusion inverse problem. Our scheme outperforms existing methods:
\begin{itemize}
\item Our scheme converges much faster than non-adaptive product-convolution approximation for the spatially varying blur operator.
\item The number of sample points required to approximate the non-local component of the Poisson Schur complement is independent of the mesh size.
\item Approximation using a small number of sample points yields  a high quality preconditioner for the Poisson Schur complement.
\item The number of sample points required to approximate the advection-diffusion Hessian is independent of the Peclet number, a proxy for the informativeness of the data in the inverse problem. 
\item A Hessian preconditioner that results from using our approximation performs well even if the Peclet number is large.
\end{itemize} 
We also find that the randomized a-posteriori error estimator performs much better than standard theory predicts: we see that it performs almost as well with $5$ random samples as it does with $100$.

Although our scheme will eventually converge to any desired error tolerance, it is most useful for computing moderately accurate approximations (say, $80$\% to $99$\% accurate) of ``difficult'' operators that are poorly approximated by standard techniques. In our numerical tests, we observe that the convergence slows beyond this accuracy. Moderate accuracy approximation is sufficient for many engineering applications, and is ideal for building preconditioners.

\subsection{Existing work}
\label{sec:literature_review}

The most widely used, robust, and general purpose matrix-free operator approximation schemes are based on low-rank approximation (Section \ref{sec:low_rank}). However, many important operators in PDEs, PDE-constrained optimization and inverse problems, and integral equations are not low-rank. Our scheme fits within a class of operator approximation schemes based on interpolation of convolution operators (Section \ref{sec:convolution_interpolation_review}).  Hierarchical matrices (Section \ref{sec:H_matrix_background}) are another well-established operator approximation format; they are simultaneously a tool we use (Section \ref{sec:H_matrix_actual}), and an alternative to our scheme.

\subsubsection{Low-rank approximation}
\label{sec:low_rank}

Low-rank approximations---matrix factorizations of the form $A \approx B C$, where $B$ is $N \times r$ (tall), and $C$ is $r \times N$ (wide)---can be efficiently constructed in a matrix-free setting by using Krylov methods (Lanczos or Arnoldi),  randomized SVD \cite{HalkoMartinssonTropp11} or CUR decomposition/skeletonization \cite{Cheng05,goreinov1997theory,Mahoney11,Tyrtyshnikov00}. Although low-rank approximations have been used for Dirichlet-to-Neumann maps \cite{Calvetti15b,Calvetti15}, full-rank or high-rank operators typically still retain a high rank after being restricted to a boundary as a Schur complement. Likewise, although low-rank approximations have been used to approximate the (prior preconditioned) Hessian of the data misfit term in PDE-constrained inverse problems \cite{BuiEtAl13,CuiEtAl14,FlathEtAl11,PetraEtAl14,SpantiniEtAl15}, the numerical rank of this term grows as the informativeness of the data in the inverse problem grows \cite{AlgerEtAl17}, making low-rank approximation inefficient for highly informative data. Even when the operator is low-rank in the sense that $r \ll N$, the cost of computing the low-rank approximation may be prohibitive. For example, a low-rank approximation of the Hessian in a PDE constrained optimization or inverse  problem requires $O(r)$ linearized forward/adjoint PDE solves, so that for large-scale problems with e.g.\ $N$ of order $10^6$, even a compression of $0.1\%$ still means that thousands of forward solves are needed, which is often an expensive proposition \cite{Bui-ThanhBursteddeGhattasEtAl12,ChenVillaGhattas18,IsaacPetraStadlerEtAl15}. Our scheme is motivated by a desire to go beyond low-rank approximation in these applications.

\subsubsection{Convolution interpolation}
\label{sec:convolution_interpolation_review}

Since the linear operator that performs a convolution may be numerically full-rank (e.g., convolution with $\delta_0$: the identity operator) or high-rank (e.g., convolution with a Gaussian with a small width), interpolation of convolution operators can, where applicable, be used to approximate dense operators with far fewer terms than the rank of the operator.

Operator approximation schemes based on weighted sums of convolution operators with spatially varying weights (``convolution interpolations'') fall into two categories: product-convolution schemes where one performs element-wise products with weighting functions first and convolutions second, and convolution-product schemes where this order is reversed:
\begin{equation}
\label{eq:conv_prod_vs_prod_conv}
Af \approx \quad\quad \underbrace{\sum_{k=1}^r \psi_k \ast (\omega_k \cdot f)}_{\text{product-convolution}} \quad\quad \text{vs.} \quad\quad \underbrace{\sum_{k=1}^r \omega_k \cdot \left(\psi_k \ast f\right)}_{\text{convolution-product}}.
\end{equation}
The terms ``product-convolution'' and ``convolution-product'' refer to the general format of the approximations in \eqref{eq:conv_prod_vs_prod_conv}, where $\psi_k$ and $\omega_k$ could be any functions. For us, $\psi_k$ are (modified) impulse response functions and $\omega_k$ form a partition of unity. Since the entries of a convolution operator $M:f\mapsto\psi \ast f$ are $M\ix{y,x} = \psi\ix{y-x}$, product-convolution and convolution-product approximations have the following $(y,x)$ matrix entries:
\begin{equation}
\label{eq:conv_prod_vs_prod_conv_entries}
A\ix{y,x}\approx \quad\quad \underbrace{\sum_{k=1}^r \omega_k\ix{x}\psi_k\ix{y-x}}_{\text{product-convolution}} \quad\quad \text{vs.} \quad\quad \underbrace{\sum_{k=1}^r \omega_k\ix{y}\psi_k\ix{y-x}}_{\text{convolution-product}}.
\end{equation}
Both schemes are non-symmetric, but the adjoint of a product-convolution operator is a convolution-product operator, and vice versa. The operators defined by the following actions are adjoints of each other:
\begin{equation}
\label{eq:adjoint_random_z}
\underbrace{\sum_{k=1}^r \psi_k \ast \left(\omega_k \cdot f\right)}_{\widetilde{A}f} \quad \xlongleftrightarrow{\text{adjoint}} \quad \underbrace{\sum_{k=1}^r \overline{\omega}_k\cdot \left(\flip\left(\overline{\psi_k}\right) \ast f\right)}_{\widetilde{A}^* f},
\end{equation}
where $\flip\left(\psi\right)\ix{x}\coloneqq\psi\ix{-x}$, and the over-line indicates the complex conjugate. Here we use a product-convolution scheme. 

Convolution interpolation schemes have been used in image restoration and deblurring \cite{FishEtAl96,NagyOleary98,TrussellHunt78} in photography \cite{TrussellFogel92}, astronomy \cite{Adorf94,FlickerRigaut05,RogersFiege11}, and microscopy \cite{PrezaConchello04}, as well as in wireless communication signal processing \cite{HrycakEtAl10}, ultrasound imaging \cite{NgPragerKingsbury07}, systems biology \cite{GiladVonHardenberg06}, and Hessian approximation in seismic inversion \cite{ZhuLiFomelEtAl16}.\footnote{In many of these applications, the impulse response is known as the point spread function (PSF), as it corresponds to the spreading of a point source of light as it passes through an optical system.} Aside from the application, convolution interpolation schemes differ based on how they construct the functions $\omega_k$ and $\psi_k$. For a comprehensive overview of existing schemes, we refer the reader to the summaries in \cite{DenisEtAl15, EscandeWeiss17, GentileCourbinMeylan13}. 

Existing schemes can be categorized based on whether the span of the functions $\omega_k$ is fixed, or the span of the functions $\psi_k$ is fixed, or both of the spans are fixed, or neither of the spans are fixed. Schemes then attempt to find the remaining (not fixed) functions and the coefficients for linear combinations of the fixed functions so that the error in the resulting operator approximation is small. Established choices for the span of the functions $\psi_k$ include the span of impulse responses of $A$ to point sources at a collection of fixed locations (we do this), subspaces of this span, and the span of functions with known analytic forms (e.g., Gaussians, spherical harmonics). Established choices for the span of the functions $\omega_k$ include spans of Fourier modes, piecewise polynomials  on a regular grid (e.g., piecewise constants, piecewise linear functions, B-splines), wavelets, radial basis functions \cite{BigotEscandeWeiss16}, and functions based on kriging. 

On one hand, existing schemes in which the functions $\omega_k$ are not fixed\footnote{The terminology for this is potentially confusing: in the literature, computed (rather than fixed) functions $\omega_k$ are known as ``adaptive'' weighting functions, but this is unrelated to our ``adaptive grid'' weighting functions.} require more access to $A$ than just the ability to apply it to vectors. On the other hand, existing schemes in which the functions $\omega_k$ are fixed do not permit spatial adaptivity, with one exception. This includes existing sectioning approaches that partition the domain into pieces on a regular grid, then use different functions $\psi_k$ for each piece \cite{NagyOleary98}. The exception is \cite{BardsleyEtAl06}, which, like this paper, proposes partitioning the domain with an adaptively refined grid. However, \cite{BardsleyEtAl06} only proposes the concept; they do not provide practical algorithms to perform the adaptivity.

In \cite{BelangerDemanet15}, matrix probing \cite{ChiuDemanet12} using basis matrices with $(y,x)$ entries that take the form $\omega_k\ix{x+y}\psi_k\ix{y-x}$ is used to approximate the exterior Dirichlet-to-Neumann map for a forward wave propagation problem. This approximation could be viewed as a middle ground between a product-convolution scheme and a convolution-product scheme, which would correspond to basis matrices of the form $\omega_k\ix{x}\psi_k\ix{y-x}$ and $\omega_k\ix{y}\psi_k\ix{y-x}$, respectively. After constructing the approximation, \cite{BelangerDemanet15} proposes converting it to $H$-matrix format for further use. Our approximation is different, but we also propose the same subsequent $H$-matrix conversion.

\subsubsection{Hierarchical matrices}
\label{sec:H_matrix_background}

Hierarchical matrices \cite{Hackbusch99} are matrices that may be full-rank, but the blocks of the matrix associated with clusters of degrees of freedom that are far away from each other (or satisfy some other admissibility condition) are low-rank. This structure allows for compressed storage and fast (nearly linear) matrix arithmetic, including matrix inversion and factorization. Special subclasses of $H$-matrices such as $H^2$-matrices \cite{HackbuschKhoromskijSauter00} (among others) allow for greater compression and faster matrix arithmetic. For an overview of $H$- and $H^2$-matrices, see \cite{Borm10,Hackbusch15}. 

Classical $H$-matrix construction techniques require access to the matrix entries of $A$, and hence are not applicable here. There exist matrix-free $H$-matrix construction techniques based on a recursive ``peeling-process'' \cite{LinYing11}, but these techniques have several subtle limitations. Although asymptotically scalable in theory, in practice the peeling process must apply the original operator to a large number of vectors. Furthermore, attempting to construct a less accurate approximation by applying the original operator to fewer vectors is not advisable (unlike our scheme where this is fine). Errors at any step of the peeling process compound during subsequent steps. Finally, the peeling process is purely algebraic. This makes the peeling process more general, at the cost of potentially being less efficient than specialized schemes (like ours) that take advantage of local translation-invariance or other properties of the operator being approximated.

\subsection{Setting and notation}
\label{sec:setting_notation}
We work in $l^2$ spaces on $\mathbb{Z}^d$ or subsets of $\mathbb{Z}^d$; these spaces arise when one discretizes a function on a continuous domain using a regular grid. Norms are denoted with $\nor{\cdot}$, or occasionally $\nor{\cdot}_{l^2(X)}$ if the domain is not clear from context. For linear operators we always use the Frobenius norm (square root of the sum of squares of all entries of the matrix representation of the linear operator). 

We routinely encounter Cartesian products of intervals, which we call \emph{boxes} and denote with a bold letter, as in $\inds{C}$. Boxes are characterized by their \emph{minimum point} and \emph{maximum point}: the points in the box that are component-wise less than or equal to all other points in the box, or greater than or equal to all other points in the box, respectively. We denote the minimum and maximum points of a box with the same letter as the box, but lower-case, and with the subscripts ``min'' and ``max'', respectively. For example, $\inds{C} = \bigtimes_{i=1}^d [c_\text{min}^i, c_\text{max}^i]$, where $\bigtimes$ is the Cartesian product of sets. We write $\corners(\inds{C})\coloneqq\bigtimes_{i=1}^d \{c_\text{min}^i, c_\text{max}^i\}$ to denote the set of corners of $\inds{C}$. The (approximate) midpoint, $c_\text{mid}$, of the box $\inds{C}$ is the integer vector closest to the real vector $(c_\text{max} + c_\text{min})/2$. The \emph{linear dimension} of a box is the sum of all the dimensions of the box: $\sum_{i=1}^d c_\text{max}^i - c_\text{min}^i$. 

Minkowski set arithmetic is used for addition and subtraction of one set with another set, negation of a set, and addition and subtraction of a set with a point:
\begin{equation*}
X + Y = \{x+y: x \in X, y \in Y\}, \quad\quad X - Y = \{x-y: x \in X, y \in Y\},
\end{equation*}
and similar for negation of a set, and addition and subtraction of a point from a set. The number of elements in a set $X$ is denoted $|X|$. We reserve $N$ for the total number of points in the domain: $N\coloneqq |\inds{\Omega}|$.

The evaluation of $f$ at $x$ is denoted $f\ix{x}$, and $f\ix{\inds{C}} \in l^2(\inds{C}-c_\text{min})$, with $\left(f\ix{\inds{C}}\right)\ix{x} \coloneqq f\ix{x + c_\text{min}}$. Likewise, $M\ix{y,x}$ is the $(y,x)$ ``matrix entry'' of $M$, and $M\ix{\inds{T}, \inds{S}} \in l^2\left((\inds{T}-t_\text{min}) \times (\inds{S}-s_\text{min})\right)$ with $\left(M\ix{\inds{T}, \inds{S}}\right)\ix{y,x} \coloneqq M\ix{y + t_\text{min}, x+s_\text{min}}$. That is, $M\ix{\inds{T},\inds{S}}$ is the $\inds{T},\inds{S}$ ``block'' of $M$. A dot within indexing brackets, as in $M\ix{\inds{C},~\cdot~}$ or $M\ix{~\cdot~,\inds{C}}$, indicates the matrix of all columns or rows of $M$ corresponding to points in $\inds{C}$, respectively. The action of a linear operator $M$ on a vector $f$ is denoted $Mf$. We write $M^*$ to denote the adjoint of $M$. That is, $M^*\ix{y,x} = \overline{M\ix{x,y}}$, where the over-line indicates the complex conjugate.

A dot between two functions denotes \emph{pointwise multiplication} of those functions: 
\begin{equation*}
\left(f \cdot g\right)\ix{x} \coloneqq f\ix{x} g\ix{x}.
\end{equation*}
An asterisk between two functions denotes \emph{convolution} of those functions:
\begin{equation}
\label{eq:convolution_definition}
(\psi \ast f)\ix{y} \coloneqq \sum_{x \in \mathbb{Z}^d} f\ix{x} \psi\ix{y-x}.
\end{equation}
If the domains of functions $f,\psi$ are only subsets of $\mathbb{Z}^d$, we define their convolution to be the result of extending $f,\psi$ by zero so that they are defined on all of $\mathbb{Z}^d$, then convolving them using formula \eqref{eq:convolution_definition}. We use the term ``convolution rank'' to denote the number of terms in a weighted sum of convolution operators (e.g., $r$ in \eqref{eq:conv_op_approx}).

We define the functions
\begin{equation*}
\delta_p\ix{x}:=\begin{cases}
1, & x=p, \\
0, & \text{otherwise}
\end{cases} \quad \text{and} \quad \mathbbm{1}_X := \begin{cases}
1, & x\in X, \\
0, & \text{otherwise}.
\end{cases}
\end{equation*}
We denote the support of a function $f$ by $\supp(f)$. By the ``support'' of a function, we mean the largest set on which the function could, in principle, be non-zero (independent of whether the numerical value of the function happens to be zero). We call a function of $N$ \emph{nearly linear} if it scales as $O(N \log^a N)$ for $N \rightarrow \infty$, where $a$ is some small non-negative integer (say $a \in \{0,1,2\}$).

\section{The adaptive product-convolution approximation}
\label{sec:convolution_operator_approx}

As discussed in Section \ref{sec:intro0}, if $A$ were translation-invariant (i.e., if \eqref{eq:translation_invariance} held with equality for all $x,y \in \mathbb{Z}^d$), then $A$ would be the convolution operator defined by the action $Af = \varphi_p \ast f$, where $\varphi_p$ is the impulse response of $A$ at $p$, as defined in \eqref{eq:varphi_p}. For example, the solution operator for a homogeneous PDE on an unbounded domain is translation-invariant, and $\varphi_p$ is the Green's function for the PDE. Of course, translation-invariant operators are rare in practice. It is more common for $A$ to only be \emph{approximately} translation-invariant (see Figure \ref{fig:translation_invariance}), and for the approximate translation-invariance to be valid only \emph{locally}. That is, 
\begin{equation}
\label{eq:translation_fail3}
A\ix{p+y-x,p} \approx A\ix{y,x} \quad\text{when }x\in U
\end{equation}
for some neighborhood $U$ consisting of points ``near'' $p$. We will provide a rigorous analysis of approximation errors in Section \ref{sec:error_analysis}; for now we leave the exact nature of this approximate equality ($\approx$) intentionally vague. Just as translation-invariance of $A$ implies that $A$ is a convolution operator, local approximate translation-invariance of $A$ implies that $A$ can be locally approximated by a convolution operator. Specifically, \eqref{eq:translation_fail3} implies
\begin{equation}
\label{eq:conv_approx_equal}
Ag \approx \varphi_p \ast g \quad \text{when~}\supp(g) \subset U.
\end{equation}
In order to approximate the action of $A$ on functions $f$ supported on a larger region of interest, we patch together local convolution operator approximations. Let $\{U_k\}_{k=1}^r$ be a collection of sets covering $\supp(f)$, let $\{w_k\}_{k=1}^r$ be a partition of unity subordinate to this cover, let $p_k \in U_k$ for $k=1,\dots,r$, and define $\varphi_k \coloneqq \varphi_{p_k}$. If the following local approximations hold:
\begin{equation}
\label{eq:local_conv_approx5}
Ag \approx \varphi_k \ast g \quad \text{when }\supp(g) \subset U_k, \quad k=1,\dots,r,
\end{equation}
then $A$ can be globally approximated as follows:
\begin{equation}
Af = A \sum_{k=1}^r w_k \cdot f 
= \sum_{k=1}^r A (w_k \cdot f) 
\approx \sum_{k=1}^r \varphi_k \ast (w_k \cdot f). \label{eq:unextended_conv_scheme}
\end{equation}
The first equality follows from the partition unity property of the functions $w_k$, the second follows from the linearity of $A$, and the approximate equality follows from the local approximation property \eqref{eq:local_conv_approx5} and the fact that $\supp(w_k \cdot f) \subset U_k$.

\subsection{Overview of the approximation}
\label{sec:convolution_product_format}

The previous derivation leads us to approximate $A$ with the following \emph{product-convolution approximation}:
\begin{equation}
\label{eq:conv_op_approx_main}
\widetilde{A} f \coloneqq \sum_{k=1}^r \varphi_k^E \ast (w_k \cdot f),
\end{equation}
where
\begin{itemize}
\item $\left\{\varphi_k^E\right\}_{k=1}^r$ are modified (``extended'') versions of the impulse responses
\begin{equation}
\label{eq:varphi_p_section2}
\varphi_k\ix{z} = \left(A \delta_{p_k}\right)\ix{z+p_k}, \quad z \in \inds{\Omega}-p_k,
\end{equation}
for a collection of \emph{sample points} $\{p_k\}_{k=1}^r$.
\item The sample points $\{p_k\}_{k=1}^r$ reside in a collection of overlapping sets $\{U_k\}_{k=1}^r$ that cover $\inds{\Omega}$: 
\begin{equation*}
p_k \in U_k \text{ for }k=1,\dots,r \quad \text{and}\quad \inds{\Omega} \subset \bigcup_{k=1}^r U_k.
\end{equation*}
\item $\left\{w_k\right\}_{k=1}^r$ is a partition of unity subordinate to the cover:
\begin{equation*}
\supp(w_k) \subset U_k \text{ for }k=1,\dots,r \quad \text{and}\quad \sum_{k=1}^r w_k\ix{x} = 1 \quad \text{for all }x \in \inds{\Omega}.
\end{equation*}
\end{itemize}
Our scheme is defined by the points $p_k$, the sets $U_k$, the partition of unity weighting functions $w_k$, and the extended impulse response functions $\varphi_k^E$. 

In general, translation-invariance varies spatially. By this, we mean that the size of the neighborhood $U$ on which the error in \eqref{eq:translation_fail3} is sufficiently small depends on the location of $U$. To fix ideas, suppose that $A$ is the solution operator for an inhomogeneous elliptic PDE. In this case, the size of $U$ will typically be small if the coefficient in the PDE varies over short length scales within $U$, and large if the coefficient varies over large length scales within $U$. In order to capture such spatial variations in translation-invariance while minimizing the number of sample points used, we choose $p_k$ and $U_k$ adaptively (Sections \ref{sec:adaptive_sample_points} and \ref{sec:adaptive_algorithm}). A randomized adjoint based a-posteriori error estimator (Section \ref{sec:randomized_estimator}) drives the adaptivity. 

Due to boundary effects, translation-invariance typically fails in directions perpendicular to a boundary, but holds in directions parallel to that boundary. For example, let $\varphi_p$ be the Green's function at $p$ for a homogeneous PDE on an infinite half-space. Although $\varphi_p$ changes as $p$ approaches the boundary, by symmetry it does not change as $p$ moves parallel to the boundary. In order to address this direction-dependent translation-invariance, we refine anisotropically, subdividing preferentially in directions that $\varphi_p$ changes the most as a function of $p$ (Section \ref{sec:anisotropic}).

The adaptive refinement procedure creates unusually shaped neighborhoods $U_k$. We construct harmonic weighting functions, $w_k$, on these sets by solving local Laplace problems (Section \ref{sec:poisson_weighting_functions}).

\begin{figure}
\begin{subfigure}[t]{0.28\textwidth}
	\center
    \includegraphics[scale=0.20]{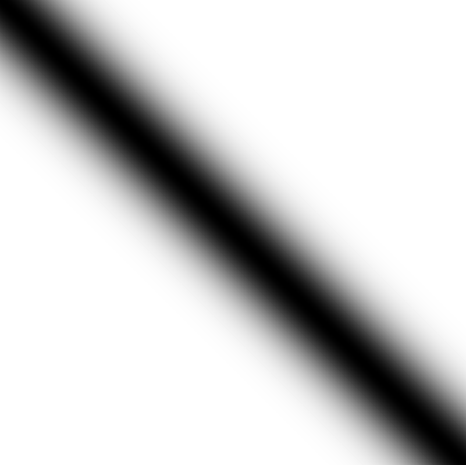}
    \caption{$A$}
    \label{fig:1d_conv_exact}
\end{subfigure}
\begin{subfigure}[t]{0.28\textwidth}
	\center
    \includegraphics[scale=0.20]{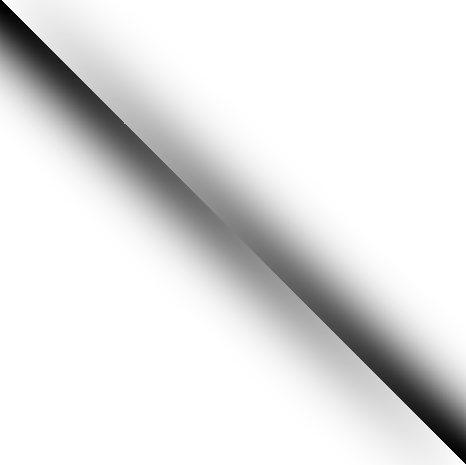}
    \caption{$\widetilde{A}$}
    \label{fig:1d_conv_approx}
\end{subfigure}
\begin{subfigure}[t]{0.4\textwidth}
	\center
    \includegraphics[scale=0.20]{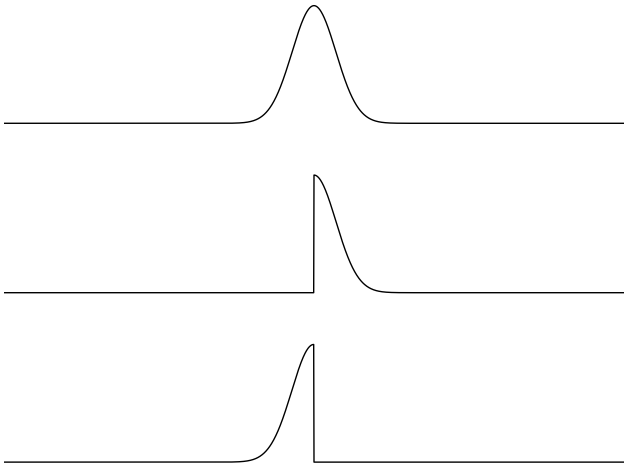}
    \caption{Top: $\varphi$. Mid: $\varphi_\text{left}$. Bot: $\varphi_\text{right}$.}
    \label{fig:1d_conv_varphis}
\end{subfigure}
\caption{Extending impulse responses by zero leads to boundary artifacts even if $A$ is, itself, a convolution operator. Here $A$ (\ref{fig:1d_conv_exact}) takes a function defined on $[1,N]$, extends it by zero to $\mathbb{Z}$, convolves it with a Gaussian $\varphi$ (\ref{fig:1d_conv_varphis}), then restricts the result to $[1,N]$. The approximation, $\widetilde{A}$  (\ref{fig:1d_conv_approx}), linearly interpolates between convolution with $\varphi_\text{left}$ at $1$ and $\varphi_\text{right}$ at $N$, where $\varphi_\text{left}$ and $\varphi_\text{right}$ (\ref{fig:1d_conv_varphis}) are the impulse responses of $A$ to point sources centered at $1$ and $N$, respectively, with extension by zero used as needed. Black indicates value $1$, and white indicates value $0$ in \ref{fig:1d_conv_exact} and \ref{fig:1d_conv_approx}.}
\label{fig:1d_conv}
\end{figure}

Because of boundaries, the domains of definition of the functions $\varphi_k$ are not large enough for the convolutions in the naive product-convolution formula, $\sum_{k=1}^r \varphi_k \ast \left(w_k \cdot f \right)$, to be well-defined. Extending functions by zero as needed makes these convolutions well-defined, but this leads to boundary artifacts wherever zeros are used in place of undefined data. These boundary artifacts are purely a side effect of the scheme and are unrelated to real boundary effects present in the underlying operator $A$; they occur even in the case where $A$ is, itself, a convolution operator (see Figure \ref{fig:1d_conv}). To eliminate such boundary artifacts, we extend the functions $\varphi_k$ outside of their natural support by using information from neighboring functions $\varphi_j$ to create ``extended'' impulse response functions $\varphi_k^E$ (Section \ref{sec:boundary_extension}).

\subsection{Adaptive grid structure}
\label{sec:adaptive_sample_points}

We will choose the sample points, $p_k$, so that they form an adaptively refined rectilinear grid (for example, see Figure \ref{fig:adaptive_subgrid}). This section defines the structure of the adaptive grid; the procedure for constructing it will be explained in Section \ref{sec:adaptive_algorithm}. 

\begin{figure}
\begin{subfigure}[b]{0.45\textwidth}
	\center
    \includegraphics[scale=0.7]{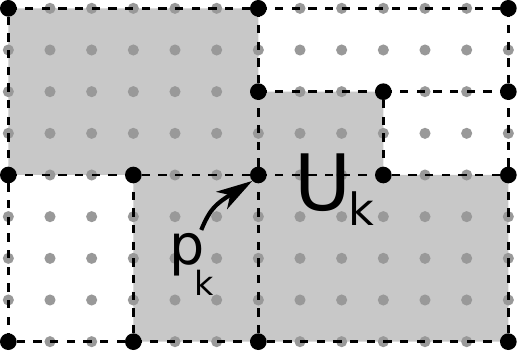}
    \caption{Blocky neighborhood $U_k$ associated with an interior sample point $p_k$.}
    \label{fig:adaptive_grid_1}
\end{subfigure}
\hfill
\begin{subfigure}[b]{0.45\textwidth}
	\center
    \includegraphics[scale=0.7]{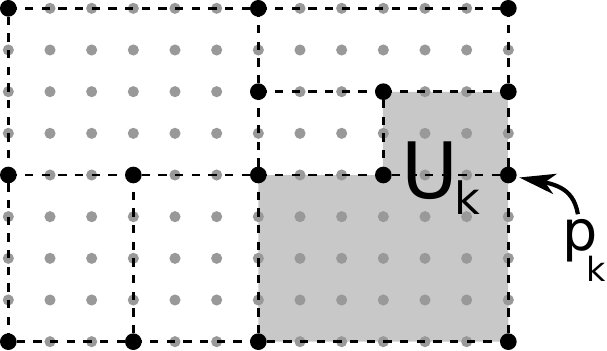}
    \caption{Blocky neighborhood $U_k$ associated with a boundary sample point $p_k$.}
    \label{fig:adaptive_grid_2}
\end{subfigure}
\caption{Sample points $p_k$ (black points) form an adaptively refined grid within $\inds{\inds{\Omega}}$ (all gray and black points). The blocky neighborhood $U_k$ associated with sample point $p_k$ (shaded light gray region) is the union of all leaf cells that contain $p_k$.}
\label{fig:adaptive_subgrid}
\end{figure}

We organize the domain $\inds{\Omega}$ into a binary tree, $\mathcal{T}$, of boxes $\inds{C}\subset \inds{\Omega}$ which we call \emph{cells}. The root of $\mathcal{T}$ is the whole domain $\inds{\Omega}$. Cells may be either refined or not refined; refined cells are internal nodes in $\mathcal{T}$ and unrefined cells are leaves of $\mathcal{T}$. We denote the set of all leaves of the tree by $\leaves(\mathcal{T})$. Refined cells $\inds{C}$ are subdivided in a chosen direction into a set of two child cells that share an internal facet (more about how we choose the subdivision direction in Section \ref{sec:anisotropic}). We denote the set of children of $\inds{C}$ by $\children(\inds{C})$. The corners of all cells form the set of sample points:
\begin{equation*}
\{p_k\}_{k=1}^r = \bigcup_{\inds{C} \in \mathcal{T}} \corners(\inds{C}).
\end{equation*}
Since the cells share facets, typically more than one cell contains a given sample point. We write 
\begin{equation*}
\cells(p_k)\coloneqq\{\inds{C}: \inds{C} \in \leaves(\mathcal{T}), p_k \in \inds{C}\}
\end{equation*}
to denote the set of all leaf cells containing $p_k$. We define the \emph{blocky neighborhood}, $U_k$, associated with a sample point $p_k$ as the union of all leaf cells containing $p_k$:
\begin{equation*}
U_k \coloneqq \bigcup_{\inds{C}_i \in \cells(p_k)} \inds{C}_i.
\end{equation*}
Sample points $p_k$ and $p_j$ are \emph{neighbors} if they share a common leaf cell. That is, there exists a leaf cell $\inds{C}$ such that $p_k \in \inds{C}$ and $p_j \in \inds{C}$. Note that under this definition $p_k$ is neighbors with itself. We write $\nbrs(k) \subset \{1,\dots,r\}$ to denote the set of indices of sample points that are neighbors of $p_k$, including $p_k$ itself. In other words, $j\in\nbrs(k)$ if $p_k$ and $p_j$ are neighbors.

\subsection{Adaptive refinement algorithm}
\label{sec:adaptive_algorithm}
Starting with $\inds{\Omega}$ subdivided once in all directions, we repeatedly estimate the error in all cells in $\leaves\left(\mathcal{T}\right)$ using an a-posteriori error estimator, then refine the leaf cell with the largest error. The refinement process continues until either (a) the desired error in the approximation is achieved, or (b) a predetermined maximum number of sample points $p_k$ is reached.  At each step of the refinement process we construct or modify the functions $w_k$ and $\varphi_k^E$ using methods that will be described in Sections \ref{sec:poisson_weighting_functions}, \ref{sec:boundary_extension}, and \ref{sec:construction_cost}. We perform the a-posteriori error estimation with a randomized method that will be described in Section \ref{sec:randomized_estimator}. We choose which direction to subdivide cells in using a method that will be described in Section \ref{sec:anisotropic}. The complete algorithm is summarized in Algorithm \ref{alg:construction}.

\subsection{Harmonic weighting functions}
\label{sec:poisson_weighting_functions}

\begin{figure}
	\center
    \includegraphics[scale=0.2]{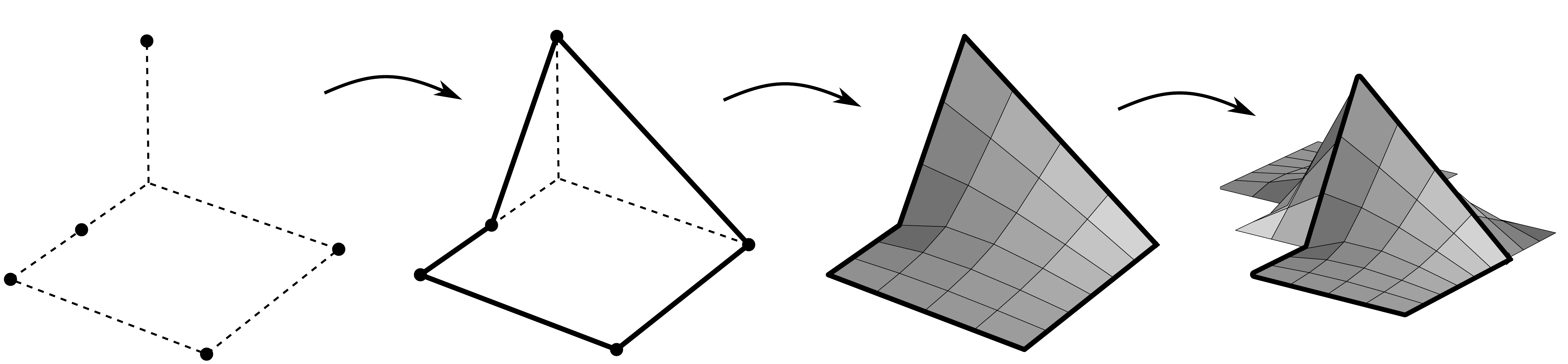}
    \caption{Construction of $w_k$ for $d=2$. For each box in $U_k$ we assign $w_k$ the value $1$ at sample point $p_k$ and $0$ at all other sample points. For edges between sample points, we compute the values of $w_k$ by solving the discrete 1-D Laplace equation, using the previously assigned values at sample points as Dirichlet boundary conditions. For faces, we compute the values of $w_k$ by solving the discrete 2-D Laplace equation, using the previously computed edge values as Dirichlet boundary conditions. Finally, we form $w_k$ on $U_k$ by combining its constituent pieces on each box.}
    \label{fig:harmonic}
\end{figure}

We construct harmonic partition of unity weighting functions, $w_k$, by solving discrete local Laplace (diffusion) problems recursively on subsets of $U_k$. This process is equivalent to the construction of harmonic basis functions in finite element methods \cite{Bishop14}, and also shares conceptual ties with partition of unity finite element methods \cite{BabuskaMelenk95} and the construction of coarse basis functions in agglomerated element algebraic multigrid \cite{JonesVassilevski01}. 

The blocky neighborhood $U_k$ is a union of $d$-dimensional boxes. The boundary of each $d$-dimensional box is a union of $(d-1)$-dimensional facets, each of which is a box. There are $2d$ facets, corresponding to either the front or the back of the box in each coordinate direction. Facets that contain hanging nodes (``broken facets'') are the union of several smaller $(d-1)$-dimensional boxes. Hence the boundary of each $d$-dimensional box can be expressed as the union of $(d-1)$-dimensional boxes, where we exclude broken facets in favor of their constituent smaller boxes. In the same way, the boundary of each $(d-1)$-dimensional box is a union of $(d-2)$-dimensional boxes, and so forth all the way down until we reach a set of $0$-dimensional sample points. We build harmonic weighting functions by solving the Laplace equation ($-\Delta w_k = 0$) on these boxes recursively in dimension, using the values from lower-dimensional boxes as Dirichlet boundary conditions for higher-dimensional boxes. For sample points $p_j$ (the lowest level), we assign $w_k\ix{p_k}=1$ and $w_k\ix{p_j}=0$ for $j \neq k$. Figure \ref{fig:harmonic} illustrates this process for $d=2$. Linearity, the maximum principle, and induction on boxes of increasing dimension show that the functions $w_k$ form a partition of unity on $\inds{\Omega}$.

For the discrete Laplace equation we use the (positive definite) discrete graph Laplacian; this is equivalent to discretizing the continuous Laplacian using a standard Kronecker sum finite difference approximation on a regular grid. The local Laplace problems can be solved efficiently (in time proportional to the number of unknowns) with multigrid \cite{BankDupont81, BraessHackbusch83}.

\subsection{Extended impulse response functions}
\label{sec:boundary_extension}

To construct $\varphi_k^E$, we first compute the impulse responses $\varphi_k$ of $A$ at the points $p_k$ by applying $A$ to point sources, then translating the results (see \eqref{eq:varphi_p_section2}). To eliminate boundary artifacts, we create $\varphi_k^E$ by extending the support of $\varphi_k$, using data from neighboring functions $\varphi_j$ to fill in regions outside of $\supp(\varphi_k)$. 
\begin{enumerate}
\item For $z$ within $\supp(\varphi_k)$, we set $\varphi_k^E\ix{z}\coloneqq\varphi_k\ix{z}$. 
\item For $z$ outside $\supp(\varphi_k)$ but within $\supp(\varphi_j)$ for at least one neighboring $\varphi_j$, we define $\varphi_k^E\ix{z}$ as the average of all neighboring $\varphi_j\ix{z}$ whose support contains $z$.
\item For $z$ outside $\supp(\varphi_k)$ and outside $\supp(\varphi_j)$ for all neighboring $\varphi_j$, we set $\varphi_k^E\ix{z}\coloneqq 0$.
\end{enumerate}
Figure \ref{fig:neighbor_extension} illustrates this procedure for a $1$-dimensional example. Our theory still holds if we use any weighted average of neighboring $\varphi_j\ix{z}$ in Step 2, provided the weights are non-negative and sum to one. We use the average since it simplifies the implementation and the explanation, and since more elaborate schemes are likely to yield only minimal improvements. The fact that we set some entries of $\varphi_k^E\ix{z}$ to zero in Step 3 is irrelevant since our scheme never accesses these entries (this will follow from Proposition \ref{prop:weighting_product_space}).

\begin{figure}
\begin{subfigure}[t]{0.24\textwidth}
	\center
    \includegraphics[scale=0.35]{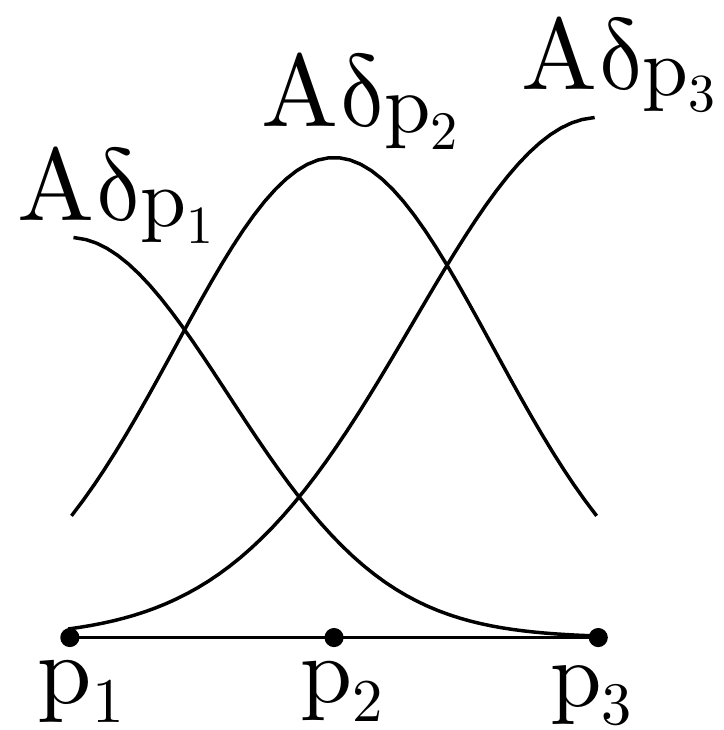}
    \label{fig:neighbor_extension3}
\end{subfigure}
\begin{subfigure}[t]{0.37\textwidth}
	\center
    \includegraphics[scale=0.35]{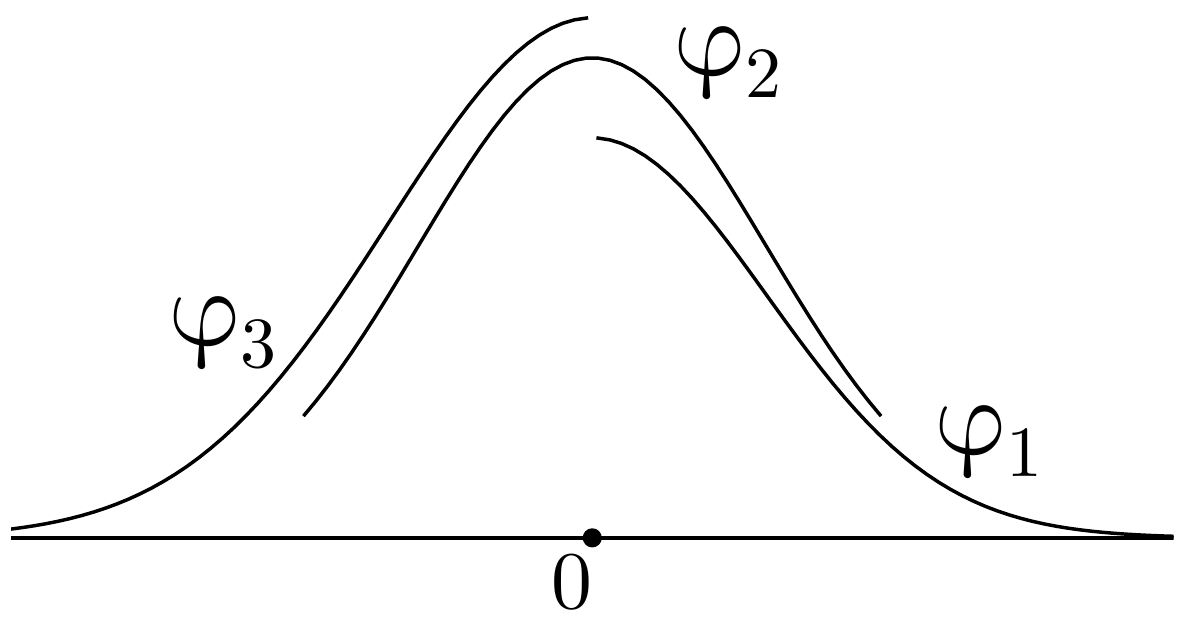}
    \label{fig:neighbor_extension2}
\end{subfigure}
\begin{subfigure}[t]{0.37\textwidth}
	\center
    \includegraphics[scale=0.35]{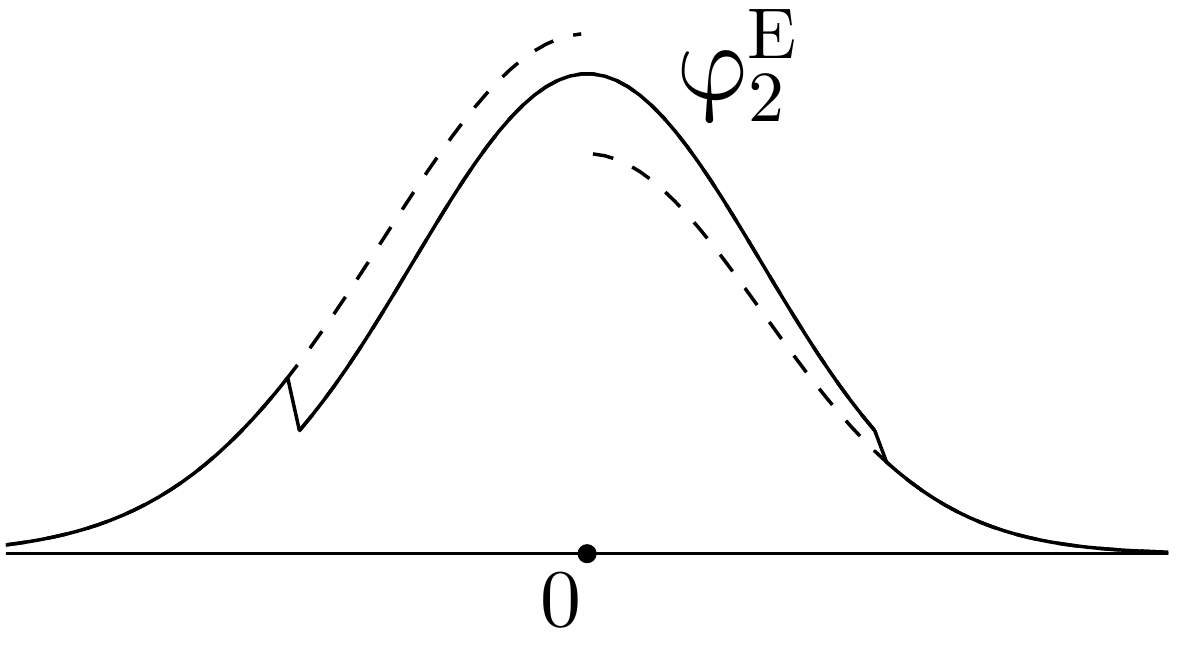}
    \label{fig:neighbor_extension1}
\end{subfigure}
\caption{Illustration of impulse response extension procedure in $1$ dimension. To construct $\varphi_2^E$, we extend the support of $\varphi_2$ by filling in regions where $\varphi_2$ is undefined with values from $\varphi_1$ and $\varphi_3$.}
\label{fig:neighbor_extension}
\end{figure}

In preparation for the theory in Section \ref{sec:error_analysis}, we now describe the process of constructing $\varphi_k^E$ more precisely. First, we construct the following counting functions:
\begin{equation*}
c_k \coloneqq \mathbbm{1}_{\inds{\Omega}-p_k} + \sum_{\substack{j\in\nbrs(k)\\j\neq k}} \mathbbm{1}_{\left(\inds{\Omega}-p_j\right) \setminus (\inds{\Omega}-p_k)}.
\end{equation*}
Since $\supp(\varphi_j) = \inds{\Omega}-p_j$, $c_k\ix{z}$ counts how many $\varphi_j$ will contribute to $\varphi_k^E\ix{z}$. Next we compute 
\begin{equation*}
v_k\ix{z} \coloneqq \begin{cases}
	1/c_k\ix{z}, & z \in \supp(c_k) \\
	0, & \text{otherwise},
	\end{cases}
\end{equation*}
and define
\begin{equation}
\label{eq:vj_def}
v_k^{(j)} \coloneqq \begin{cases}
	v_k \cdot \mathbbm{1}_{\inds{\Omega}-p_k}, & j=k \\
	v_k \cdot \mathbbm{1}_{\left(\inds{\Omega} - p_j\right) \setminus \left(\inds{\Omega}-p_k\right)}, & j\in\nbrs(k),~j \neq k,
	\end{cases}
\end{equation}
The function $v_k^{(j)}\ix{z}$ is the weight given to neighboring impulse response $\varphi_j$ at point $z$ when constructing $\varphi_k^E$. Finally, we construct $\varphi_k^E$:
\begin{equation}
\label{eq:def_varphiE}
\varphi_k^E \coloneqq \sum\limits_{j\in\nbrs(k)} v_k^{(j)} \cdot  \varphi_j.
\end{equation}

\subsection{Randomized a-posteriori error estimator}
\label{sec:randomized_estimator}

In order to decide which cells to refine, we wish to compute the error in the approximation,
\begin{equation}
\label{eq:relative_error}
e_\inds{C} := \nor{\left(\widetilde{A} - A\right)\ix{\inds{\Omega}, \inds{C}}},
\end{equation}
for all cells $\inds{C} \in \leaves(T)$. Computing these norms is prohibitively expensive, so instead we estimate them. If $M$ is any matrix with $N$ columns, then the following sample average approximation estimates the square of its Frobenius norm:
\begin{equation}
\label{eq:saa_XZ}
||M||^2 = \mathbb{E}\left( ||M \zeta||^2 \right) \approx \frac{1}{\nadj}\sum_{i=1}^{\nadj} ||M \zeta_i||^2 = \frac{1}{\nadj}\nor{M Z}^2,
\end{equation}
where $\zeta, \zeta_i \sim N(0,1)^N$, are independent and identically distributed (i.i.d.) Gaussian random vectors, $\mathbb{E}$ is the expected value, $\nadj$ is the number of samples used in the sample average approximation, and $Z \sim N(0,1)^{N \times \nadj}$ is an i.i.d. Gaussian random matrix (the matrix with columns $\zeta_i$) \cite{AvronToledo11}. Hence we can form an estimator, $\eta_\inds{C} \approx e_\inds{C}$, by forming a random matrix $Z \sim N(0,1)^{N \times \nadj}$, computing
\begin{equation*}
Y = A^* Z \quad \text{and} \quad \widetilde{Y} = \widetilde{A}^* Z,
\end{equation*}
then extracting blocks of the results, and taking norms:
\begin{equation}
\label{eqn:randomized_estimator_ec}
\eta_\inds{C} \coloneqq \frac{1}{\sqrt{\nadj}}\|\widetilde{Y}\ix{\inds{C},~\cdot~} - Y\ix{\inds{C},~\cdot~}\|.
\end{equation}
By performing the randomized sample average approximation with the adjoints $A^*$ and $\widetilde{A}^*$, we apply these operators once per sample, then post process the results to get estimators for all cells. Using the original operators $A$ and $\widetilde{A}$ instead would require us to apply these operators to new random vectors for each cell.

It is straightforward to adapt the Chernoff bound in \cite{AvronToledo11} to get an upper bound on the number of samples required. However, this bound is overly pessimistic; in practice we find the estimator is effective with only a handful of samples.

\subsection{Anisotropic refinement: choosing the subdivision direction}
\label{sec:anisotropic}
We refine anisotropically by estimating the direction that $\varphi_p$ changes the most as a function of $p$, then subdividing in that direction. This allows us to capture changes to $\varphi_p$ in directions where translation-invariance fails, without refining the grid in directions where translation-invariance holds.

Let $\inds{C}$ be a cell that we have chosen to subdivide based on the randomized a-posteriori error estimator described in Section \ref{sec:randomized_estimator}. For each coordinate direction $i$ in which $\inds{C}$ is big enough to be refined ($c_\text{max}^i - c_\text{min}^i > 2$), we partition the functions $\varphi_k^E$ associated with the corners of $\inds{C}$ into two groups. One group is the set of $\varphi_k^E$ associated with corners in the ``front'' of the cell ($+$) in the $i$th coordinate direction, and the other group is the set of $\varphi_k^E$ associated with the ``back'' of the cell ($-$) in the $i$th coordinate direction:
\begin{align*}
\Psi^{i+} \coloneqq & \{\varphi_k^E : p_k \in \corners(\inds{C}),~ p_k^i = c_\text{max}^i \}, \\
\Psi^{i-} \coloneqq & \{\varphi_k^E : p_k \in \corners(\inds{C}),~ p_k^i = c_\text{min}^i \}.
\end{align*}
Next, we construct ``average'' $\varphi_k^E$ functions for the front and back of the cell, respectively:
\begin{equation*}
\varphi^{i+} \coloneqq \frac{1}{2^{d-1}}\sum_{\varphi^E \in \Psi^{i+}} \varphi^E
\quad\text{and}\quad
\varphi^{i-} \coloneqq \frac{1}{2^{d-1}}\sum_{\varphi^E \in \Psi^{i-}} \varphi^E.
\end{equation*}
Then we determine how much these average impulse responses change from the front to the back in direction $i$ by computing $\nor{\varphi^{i+} - \varphi^{i-}}_{l^2(\inds{\Omega}-c_\text{mid})}$. Finally, we subdivide $\inds{C}$ in the coordinate direction $i$ in which the average impulse response changes the most.

\subsection{Construction cost}
\label{sec:construction_cost}

\begin{algorithm}
	\renewcommand{\algorithmicrequire}{\textbf{Input:}}
	\renewcommand{\algorithmicensure}{\textbf{Output:}}
	\caption{Construction of $\widetilde{A}$}
	\label{alg:construction}
	\begin{algorithmic}[1]
		\Require $v \mapsto A v$, $v \mapsto A^*v$, $\inds{\Omega}$, $\tau$, $q$
		\Ensure $\left(w_k, \varphi_k^E\right)_{k=1}^r$
		\Statex
		\State Draw random matrix $Z \sim N(0,1)^{N \times q}$
		\State Compute $Y = A^* Z$\Comment{Cost: $q$ applications of $A^*$}
		\State Initialize $\mathcal{T}$ with $\inds{\Omega}$ as its root
		\State Refine $\mathcal{T}$ by subdividing $\inds{\Omega}$ once in each coordinate direction
		\State Construct blocky neighborhoods $U_k$
		\State Construct harmonic weighting function $w_k$
		\State Compute impulse response functions $\varphi_k = A \delta_{p_k}$\Comment{Cost: $3^d$ applications of $A$}
		\State Construct extended impulse response functions $\varphi_k^E$
		\State Compute $\widetilde{Y} = \widetilde{A}^* Z$\Comment{Cost: $q \times 3^d$ convolutions}
		\State Form local error estimators $\eta_\inds{C}$
		\State Form overall error estimator $\eta_\inds{\Omega}$
		\While{$\eta_\inds{\Omega} > \tau$}
			\State Find cell $\inds{C} \in \leaves{\mathcal{T}}$ with the largest $\eta_\inds{C}$
			\State Determine subdivision direction, $i$, for $\inds{C}$
			\State Subdivide $\inds{C}$ in direction $i$
			\State Construct $U_k$ that are new or modified by the refinement
			\State Construct $w_k$ for new or modified $U_k$
			\State Compute $\varphi_k = A \delta_{p_k}$ for all new $p_k$\Comment{Cost: $1$ application of $A$ per new $p_k$}
			\State Construct new or modified $\varphi_k^E$
			\State Update $\widetilde{Y}$\Comment{Cost: $O(q)$ convolutions per new $p_k$}
			\State Form new or modified local error estimators $\eta_{\inds{C}}$
			\State Form overall error estimator $\eta_\inds{\Omega}$
		\EndWhile
	\end{algorithmic}
\end{algorithm}

Algorithm \ref{alg:construction} shows the complete algorithm for constructing $\widetilde{A}$. Updating $\widetilde{A}$ after refining a cell requires us to apply $A$ to point sources centered at the new sample points created during the refinement. Hence the entire refinement process requires us to apply $A$ to $r$ vectors, where $r$ is the total number of sample points in the final product-convolution approximation.

The dominant cost in the error estimation process is the cost of computing $A^*Z$ and $\widetilde{A}^*Z$ for a random matrix $Z$ with $q$ columns. Since $A^*Z$ is constant throughout the refinement process, we compute it once at the beginning. 

Although $\widetilde{A}$ changes every time we refine a cell, after performing a refinement we do not have to recompute $\widetilde{A}^*Z$ from scratch. To see this, recall from \eqref{eq:adjoint_random_z} that the adjoint of our product-convolution operator is a convolution-product operator with the convolution functions reflected about the origin and complex conjugated. In order to recompute $\widetilde{A}^*Z$ after refining cells, we only need to compute the convolutions $\flip\left(\overline{\varphi_k^E}\right) \ast \zeta_i$ for each column, $\zeta_i$, in $Z$, and each sample point, $p_k$, that is \emph{new} or has a \emph{new neighbor}.\footnote{The function $\varphi_k^E$ depends on neighboring impulse responses due to the extension procedure.} The convolutions for old sample points without new neighbors have been computed previously and can be re-used within \eqref{eq:adjoint_random_z}. Thus the error estimation process requires computing $O(r \nadj)$ convolutions. As we will discuss in Section \ref{sec:apply_H_to_vectors}, each of these convolutions can be done with the FFT in $O(N \log N)$ work. Updating the functions $w_k$ can be done locally. This requires negligible work compared to the other costs already discussed. Putting all these pieces together, constructing $\widetilde{A}$ requires
\begin{equation*}
O\left(r C + \nadj C_* + r \nadj N \log N\right),
\end{equation*}
work, where $C$ and $C_*$ are the costs to apply $A$ and $A^*$ to one vector, respectively.

\section{Using the product-convolution approximation}
\label{sec:H_matrix}
The product-convolution format allows us to perform useful operations with $\widetilde{A}$ that we cannot perform with $A$.

\subsection{Computing matrix entries of $\widetilde{A}$} 
\label{sec:matrix_entries}
Our approximation $\widetilde{A}$ is a product-convolution scheme and therefore (as seen in \eqref{eq:conv_prod_vs_prod_conv_entries}) has the following matrix entries:
\begin{equation}
\label{eq:conv_matrix_entries}
\widetilde{A}\ix{y,x} = \sum_{k=1}^r w_k\ix{x} \varphi_k^E\ix{y-x} = \sum_{k:x \in U_k} w_k\ix{x} \varphi_k^E\ix{y-x}.
\end{equation}
Using \eqref{eq:conv_matrix_entries} we can compute individual matrix entries of $\widetilde{A}$ in $O(1)$ time even though $\widetilde{A}$ is not stored in memory in the conventional sense.

\subsection{Applying $\widetilde{A}$ or $\widetilde{A}^*$ to vectors} 
\label{sec:apply_H_to_vectors}
Applying $\widetilde{A}$ or $\widetilde{A}^*$ to a vector requires computing $r$ convolutions, $r$ pointwise vector multiplications, and some vector additions (see equations \eqref{eq:conv_op_approx_main} or \eqref{eq:adjoint_random_z}, respectively). Out of these operations, the $r$ convolutions are the most computationally expensive. Since the convolution theorem allows us to compute each of these convolutions using the FFT (after appropriate zero padding)  \cite{HirschEtAl10} at $O(N \log N)$ cost, the cost of applying $\widetilde{A}$ or $\widetilde{A}^*$ to a vector is $O(r N \log N)$.

\subsection{Applying blocks of $\widetilde{A}$ or $\widetilde{A}^*$ to vectors} 
\label{sec:subblock_conv_fast}

One can implicitly apply a convolution operator to a function that is supported in a source box $\inds{S}$ then restrict the results to another target box $\inds{T}$, by performing a convolution between a function supported on a box with the same shape as $\inds{S}$ and a function supported on a box with the same shape as $\inds{T}-\inds{S}$, then translating the results. Specifically, a change of variables shows that if $f$ is supported on $\inds{S}$, then
\begin{equation*}
(\varphi \ast f)\ix{\inds{T}} = (\varphi_0 \ast f_0)\ix{\inds{T}'},
\end{equation*}
where 
\begin{equation*}
f_0\ix{x_0} \coloneqq \begin{cases}
f\ix{x_0 + s_\text{min}}, & x_0 \in \inds{S}_0, \\
0 & \quad \text{else},
\end{cases}
\quad \quad \quad
\varphi_0\ix{z_0} \coloneqq \begin{cases}
\varphi\ix{z_0 + g_\text{min}}, & z_0 \in \inds{G}_0, \\
0 & \quad \text{else}
\end{cases}
\end{equation*}
and $\inds{G}\coloneqq\inds{T}-\inds{S}$, $\inds{S}_0 \coloneqq \inds{S} - s_\text{min}$, $\inds{T}' \coloneqq \inds{T} - t_\text{min} + s_\text{max}-s_\text{min}$, and $\inds{G}_0 \coloneqq \inds{G} - g_\text{min}$. Thus one can apply a block of a convolution operator to a vector in work that scales nearly linearly with the linear dimensions of the block: $O(\sigma \log \sigma)$ where $\sigma = |\inds{S}| + |\inds{T}|$. To apply $\widetilde{A}\ix{\inds{T},\inds{S}}$ or $\widetilde{A}^*\ix{\inds{T},\inds{S}}$ to a vector, we use this method for each convolution in the sums (\eqref{eq:conv_op_approx_main} and \eqref{eq:adjoint_random_z}) defining $\widetilde{A}$ or $\widetilde{A}^*$, respectively, that could be non-zero. Since the functions $w_k$ are supported on the sets $U_k$, the terms in these sums that could be non-zero correspond to sets $U_k$ that intersect $\inds{S}$ when multiplying $\widetilde{A}\ix{\inds{T}, \inds{S}}$ with a vector, and $\inds{T}$ when multiplying with $\widetilde{A}^*\ix{\inds{T}, \inds{S}}$ with a vector. As a result, it costs
\begin{equation}
\underbrace{O(r_\inds{S}~ \sigma \log \sigma)}_{f \mapsto \widetilde{A}\ix{\inds{T}, \inds{S}} f} \quad\quad \text{and} \quad\quad \underbrace{O(r_\inds{T}~ \sigma \log \sigma)}_{f \mapsto \widetilde{A}^*\ix{\inds{T}, \inds{S}} f}
\end{equation}
work to apply $\widetilde{A}\ix{\inds{T}, \inds{S}}$ and $\widetilde{A}^*\ix{\inds{T}, \inds{S}}$ to vectors, respectively. Here $r_\inds{S}$ and $r_\inds{T}$ are the number of sets $U_k$ that intersect $\inds{S}$ and $\inds{T}$, respectively.

\subsection{Conversion to hierarchical matrix format}
\label{sec:H_matrix_actual}
Construction of a hierarchical matrix proceeds in the following steps:
\begin{enumerate}
\item The degrees of freedom are partitioned hierarchically into a cluster tree. 
\item The matrix entries are partitioned hierarchically into a block cluster tree.
\item A low-rank approximation is constructed for each block of the matrix that is marked as low-rank (i.e., admissible) within the block cluster tree.
\end{enumerate}
The $H$-matrix construction process is scalable if we can construct low-rank approximations (see Section \ref{sec:low_rank}) of the low-rank blocks (Step 3) in work that scales nearly linearly with the dimensions of the block. The method for efficiently applying blocks of $\widetilde{A}$ and $\widetilde{A}^*$ to vectors, outlined in Section \ref{sec:subblock_conv_fast}, allows us to do this using Krylov methods or randomized SVD. Whenever the Krylov method or randomized SVD requires the application of a block or its adjoint to a vector, we perform this computation using the method in Section \ref{sec:subblock_conv_fast}. Alternatively, formula \eqref{eq:conv_matrix_entries} for the matrix entries of $\widetilde{A}$ allows us to construct a low-rank approximation of a block by forming a CUR approximation, as is done in \cite{Bebendorf00,BormGrasedyck05,Tyrtyshnikov00}. Whenever the CUR approximation algorithm requires a row, column, or entry of the block, we access it using \eqref{eq:conv_matrix_entries}.

Since applying the block $\widetilde{A}\ix{\inds{T},\inds{S}}$ to a vector costs $O(\sigma \log \sigma)$ work, where $\sigma = |\inds{S}| + |\inds{T}|$, whereas accessing a row or column costs $O(\sigma)$ work, the CUR approach is asymptotically more scalable than the Krylov or randomized SVD approaches by a log factor. However, the CUR approach is less robust, and typically has poorer dependence on the rank of the blocks. In either case the overall cost of constructing the $H$-matrix scales nearly linearly with $N$. Moreover, the construction process only uses the approximation, $\widetilde{A}$. It does not require expensive application of $A$.

\section{Theory}
\label{sec:error_analysis}

Here we show that the error in $\widetilde{A}$ is controlled by the failure of $A$ to be locally translation-invariant with respect to a locally expanded cover, $\{U_k^E\}_{k=1}^r$, created by unioning each $U_k$ with its neighbors:
\begin{equation*}
U_k^E \coloneqq \bigcup_{j \in\nbrs(k)} U_j.
\end{equation*}
This provides an a-priori error estimate for the approximation, and shows that the approximation will not contain boundary artifacts. 

Let $F_k$ be the following functions that measure how much the impulse response of $A$ at $p_k$ fails to represent the impulse response of $A$ at $x$ (see Figure \ref{fig:translation_invariance}):
\begin{equation}
\label{eq:translation_failure_operator}
F_k\ix{y,x}\coloneqq A\ix{y-x+p_k,p_k} - A\ix{y, x}.
\end{equation}
We aggregate these $F_k$ to form a function $F$ which measures, pointwise, how much $A$ fails to be locally translation-invariant with respect to the cover $\{U_k^E\}_{k=1}^r$. Specifically, we define
\begin{equation}
\label{eq:defn_of_F}
F\ix{y,x} := \max_{k:(y,x) \in \mu_k^E} ~|F_k\ix{y,x}|,
\end{equation}
where the sets
\begin{equation}
\label{eq:skew_cylinder_set}
\mu_k^E\coloneqq\{(y,x): x \in U_k^E, y \in \inds{\Omega}, y - x + p_k \in \inds{\Omega}\}
\end{equation}
are defined to be all $(y,x) \subset \inds{\Omega} \times \inds{\Omega}$ such that $x \in U_k^E$, and $F_k\ix{y,x}$ is well-defined without resorting to extension by zero. In Theorem \ref{thm:a_priori_convolution_error} we will show that 
\begin{equation}
\label{eq:thm5_restatement}
\|\widetilde{A} - A\| \le \|F\|.
\end{equation} 
If we instead maximized over $k:x \in U_k^E$ rather than $k: (y,x) \in \mu_k^E$ in \eqref{eq:defn_of_F}, then the right hand side of bound \eqref{eq:thm5_restatement} would be undefined, because evaluating $\|F\|$ requires evaluating $A\ix{y-x+p_k,p_k}$, and $y-x+p_k$ may be outside of $\inds{\Omega}$ even if $x$, $y$, and $p_k$ are in $\inds{\Omega}$. Extending $A$ by zero would make $\|F\|$ well-defined, and would make the theory simple, but then the bound would be unnecessarily large due to boundary artifacts. Achieving bound \eqref{eq:thm5_restatement} while maximizing over $k: (y,x) \in \mu_k^E$ in \eqref{eq:defn_of_F} requires the boundary extension procedure of Section \ref{sec:boundary_extension}, and is the reason why proving bound \eqref{eq:thm5_restatement} will require several pages rather than a few lines.

A multi-step path leads to Theorem \ref{thm:a_priori_convolution_error}. In Proposition \ref{prop:weighting_product_space} we show that $\widetilde{A}$ can be reinterpreted as a weighted sum involving the original (not extended) impulse response functions $\varphi_k$, but with weighting functions that form a partition of unity on $\inds{\Omega}\times\inds{\Omega}$, and are supported in the sets $\mu_k^E$. Proposition \ref{prop:weighting_product_space} relies on a lemma about the functions $v_k^{(j)}$ used in our impulse response extension procedure (Lemma \ref{lem:boundary_extension_weighting}), which in turn relies on a lemma about Minkowski sums of boxes (Lemma \ref{lem:box_lemma}). After establishing these prerequisites, in Proposition \ref{prop:AAWF} we show that $\widetilde{A}-A$ can be represented as a weighted sum of the $F_k$ functions, with the same weighting functions as in Proposition \ref{prop:weighting_product_space}. Finally, we use Proposition \ref{prop:AAWF} and the properties of these weighting functions to prove bound \eqref{eq:thm5_restatement} in Theorem \ref{thm:a_priori_convolution_error}.

\begin{lem}
\label{lem:box_lemma}
If $\inds{S}$ and $\inds{T}$ are boxes, and $\inds{S}$ is at least as large as $\inds{T}$ in the sense that $s_\text{max}^i - s_\text{min}^i \ge t_\text{max}^i - t_\text{min}^i$ for $i=1,\dots,d$, then $\inds{S}+\inds{T} = \inds{S} + \corners(\inds{T})$.
\end{lem}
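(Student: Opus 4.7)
The plan is to prove the two set inclusions separately. The direction $\inds{S} + \corners(\inds{T}) \subseteq \inds{S} + \inds{T}$ is immediate from $\corners(\inds{T}) \subset \inds{T}$ and monotonicity of Minkowski sums, so the content lies entirely in showing $\inds{S} + \inds{T} \subseteq \inds{S} + \corners(\inds{T})$.

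For the nontrivial direction, I would take an arbitrary $z = s + t$ with $s \in \inds{S}$ and $t \in \inds{T}$, and construct a corner $c \in \corners(\inds{T})$ together with a point $s' \in \inds{S}$ such that $z = s' + c$. Since both $\inds{S}$ and $\inds{T}$ are Cartesian products of intervals, the problem decouples coordinate by coordinate: it suffices to show that for each $i$ one can choose $c^i \in \{t_\text{min}^i, t_\text{max}^i\}$ so that $s'^i := z^i - c^i$ lies in $[s_\text{min}^i, s_\text{max}^i]$. Picking $c^i = t_\text{min}^i$ works precisely when $t^i - t_\text{min}^i \le s_\text{max}^i - s^i$, while picking $c^i = t_\text{max}^i$ works precisely when $t_\text{max}^i - t^i \le s^i - s_\text{min}^i$.

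The key step is to rule out the case where neither option works. If both inequalities failed, adding the strict reversed inequalities would give $t_\text{max}^i - t_\text{min}^i > s_\text{max}^i - s_\text{min}^i$, which directly contradicts the size hypothesis $s_\text{max}^i - s_\text{min}^i \ge t_\text{max}^i - t_\text{min}^i$. Hence for each coordinate at least one corner choice is admissible, and assembling the coordinate-wise choices yields the desired $c \in \corners(\inds{T})$ and $s' \in \inds{S}$.

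There is no serious obstacle here; the only subtlety worth flagging is that the argument is coordinatewise and crucially uses the componentwise size assumption (a global volume bound would not suffice). Everything works identically for integer boxes in $\mathbb{Z}^d$, since all quantities involved are integers and the inequalities are preserved under addition.
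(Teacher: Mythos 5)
Your proof is correct: the containment $\inds{S}+\corners(\inds{T})\subseteq\inds{S}+\inds{T}$ is trivial, and your coordinatewise corner selection—ruling out the failure of both choices by summing the two reversed strict inequalities to contradict $s_\text{max}^i - s_\text{min}^i \ge t_\text{max}^i - t_\text{min}^i$—is exactly the standard argument for this fact (the paper relegates its proof of this lemma to the supplement, but there is essentially only one natural route, and you have taken it). Your remark that the componentwise size hypothesis is what makes the argument work, rather than any global volume comparison, is also the right thing to flag.
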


\begin{lem}
\label{lem:boundary_extension_weighting}
We have
\begin{equation}
\label{eq:vj_support_lem}
\sum_{j\in \nbrs(k)} v_{k}^{(j)}\ix{z}=\begin{cases}
1, & z \in \inds{\Omega} - U_k, \\
0, & \text{otherwise}.
\end{cases}
\end{equation}

\end{lem}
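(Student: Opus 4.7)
The plan is to first convert the claim into the essentially algebraic identity $\sum_{j\in\nbrs(k)} v_k^{(j)} = v_k \cdot c_k$ on $\supp(c_k)$, and then identify the support of $c_k$ exactly as $\inds{\Omega}-U_k$ using Lemma \ref{lem:box_lemma}. The algebraic step is the easy half: since the indicator functions $\mathbbm{1}_{\inds{\Omega}-p_k}$ and $\mathbbm{1}_{(\inds{\Omega}-p_j)\setminus(\inds{\Omega}-p_k)}$ for $j\in\nbrs(k),j\neq k$ have pairwise disjoint supports, adding the definitions \eqref{eq:vj_def} gives
\begin{equation*}
\sum_{j\in\nbrs(k)} v_k^{(j)}\ix{z} \;=\; v_k\ix{z}\left(\mathbbm{1}_{\inds{\Omega}-p_k}\ix{z}+\sum_{\substack{j\in\nbrs(k)\\ j\neq k}}\mathbbm{1}_{(\inds{\Omega}-p_j)\setminus(\inds{\Omega}-p_k)}\ix{z}\right) \;=\; v_k\ix{z}\, c_k\ix{z},
\end{equation*}
which equals $1$ on $\supp(c_k)$ and $0$ elsewhere by the definition of $v_k$. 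So everything reduces to showing $\supp(c_k)=\inds{\Omega}-U_k$.

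The hard step is the geometric identification
\begin{equation*}
\supp(c_k) \;=\; \bigcup_{j\in\nbrs(k)} (\inds{\Omega}-p_j) \;=\; \inds{\Omega}-U_k.
\end{equation*}
The first equality is immediate from the disjoint decomposition above. For the second I plan to use Lemma \ref{lem:box_lemma} cell by cell. Writing $U_k=\bigcup_{\inds{C}_i\in\cells(p_k)} \inds{C}_i$, I would apply Lemma \ref{lem:box_lemma} to each cell $\inds{C}_i$ (which is a box no larger than the box $\inds{\Omega}$ in any coordinate, since $\inds{C}_i\subset\inds{\Omega}$) after the trivial reformulation $\inds{\Omega}-\inds{C}_i = \inds{\Omega}+(-\inds{C}_i)$ and the observation $\corners(-\inds{C}_i)=-\corners(\inds{C}_i)$. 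This yields $\inds{\Omega}-\inds{C}_i=\inds{\Omega}-\corners(\inds{C}_i)$. Taking the union over $\inds{C}_i\in\cells(p_k)$ then gives
\begin{equation*}
\inds{\Omega}-U_k \;=\; \bigcup_{\inds{C}_i\in\cells(p_k)} \bigl(\inds{\Omega}-\corners(\inds{C}_i)\bigr) \;=\; \bigcup_{j\in\nbrs(k)}(\inds{\Omega}-p_j),
\end{equation*}
where the final equality uses that the corners of the leaf cells containing $p_k$ are precisely the sample points $p_j$ with $j\in\nbrs(k)$, by definition of $\nbrs(k)$.

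Putting the two pieces together finishes the lemma: on $\inds{\Omega}-U_k=\supp(c_k)$, the sum equals $v_k\cdot c_k=1$, and off $\inds{\Omega}-U_k$ every indicator $\mathbbm{1}_{\inds{\Omega}-p_k}\ix{z}$ and $\mathbbm{1}_{(\inds{\Omega}-p_j)\setminus(\inds{\Omega}-p_k)}\ix{z}$ appearing in \eqref{eq:vj_def} vanishes, so every $v_k^{(j)}\ix{z}$ is zero. The main obstacle is really step two, the geometric identity $\inds{\Omega}-\inds{C}_i=\inds{\Omega}-\corners(\inds{C}_i)$; Lemma \ref{lem:box_lemma} is exactly tailored to this and does all the work once one thinks to negate $\inds{C}_i$ and pass to Minkowski sums.
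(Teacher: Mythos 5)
Your proof is correct and follows essentially the same route as the paper's: reduce the claim to $\supp(c_k)=\bigcup_{j\in\nbrs(k)}(\inds{\Omega}-p_j)$, then identify this set with $\inds{\Omega}-U_k$ by distributing the Minkowski difference over the leaf cells of $U_k$, applying Lemma \ref{lem:box_lemma} to each $\inds{\Omega}-\inds{C}_i$, and using that the corners of the leaf cells containing $p_k$ are exactly the neighbors $p_j$. One harmless slip: the sets $(\inds{\Omega}-p_j)\setminus(\inds{\Omega}-p_k)$ for distinct $j\neq k$ need \emph{not} be pairwise disjoint (this is precisely why $c_k$ can exceed $1$ and the $1/c_k$ normalization is needed), but your actual computation never uses that claimed disjointness---only the definitions of $c_k$ and $v_k$---so the argument stands.
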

\begin{proof}
By construction,
\begin{equation*}
\sum_{j\in \nbrs(k)} v_{k}^{(j)}\ix{z}=\begin{cases}
1, & z \in \supp(c_k), \\
0, & \text{otherwise},
\end{cases}
\end{equation*}
and $\supp(c_k) = \bigcup\limits_{j \in \nbrs(k)}\left(\inds{\Omega}-p_j\right)$. We now show that 
$\inds{\Omega} - U_k = \bigcup\limits_{j \in \nbrs(k)}\left(\inds{\Omega}-p_j\right).$
To that end, recall that $U_k$ is the union of leaf boxes $\inds{C}_i$ that contain $p_k$. Thus
\begin{equation*}
\inds{\Omega} - U_k = \inds{\Omega} - \bigcup_{\inds{C}_i \in \cells(p_k)} \inds{C}_i = \bigcup_{\inds{C}_i \in \cells(p_k)} \left(\inds{\Omega} - \inds{C}_i\right).
\end{equation*}
Since $\inds{C}_i \subset \inds{\Omega}$, we see that $\inds{\Omega}$ is at least as large as $-\inds{C}_i$ (in the sense of Lemma \ref{lem:box_lemma}). Applying Lemma \ref{lem:box_lemma} to $\inds{\Omega}-\inds{C}_i$ and performing algebraic manipulations yields:
\begin{equation*}
\bigcup_{\inds{C}_i \in \cells(p_k)} \left(\inds{\Omega} - \inds{C}_i\right) = \bigcup_{\inds{C}_i \in \cells(p_k)} \left(\inds{\Omega} - \corners\left(\inds{C}_i\right)\right)
= \inds{\Omega} - \bigcup_{\inds{C}_i \in \cells(p_k)} \corners\left(\inds{C}_i\right).
\end{equation*}
Furthermore, by definition the union of all corners of leaf cells containing a point is the union of all neighboring points, so we have
\begin{equation*}
\inds{\Omega} - \bigcup_{\inds{C}_i \in \cells(p_k)} \corners\left(\inds{C}_i\right) 
= \inds{\Omega} - \bigcup_{j \in \nbrs(k)} p_j 
= \bigcup_{j \in \nbrs(k)} \left(\inds{\Omega} - p_j\right),
\end{equation*}
which, with the chain of set equalities in previous lines, implies the desired result.
\end{proof}

\begin{prop}
\label{prop:weighting_product_space}
Let
\begin{equation}
\label{eq:Wj_def}
W_k\ix{y,x} \coloneqq \sum_{j\in\nbrs(k)} w_j\ix{x} v_{j}^{(k)}\ix{y-x}.
\end{equation}
\begin{enumerate}
\item The entries of $\widetilde{A}$ can be written as:
\begin{equation*}
\widetilde{A}\ix{y,x} = \sum_{k=1}^r W_k\ix{y,x} \varphi_k\ix{y-x}.
\end{equation*}
\item The functions $\{W_k\}_{k=1}^r$ form a partition of unity:
\begin{equation*}
\sum_{k=1}^r W_k\ix{y,x}=1 \quad\text{for all } (y,x)\in \inds{\Omega}\times \inds{\Omega}
\end{equation*}
\item The partition of unity is subordinate to the cover $\{\mu_k^E\}_{k=1}^r$:
\begin{equation*}
\supp(W_k)\subset\mu_k^E.
\end{equation*}
\end{enumerate}
\end{prop}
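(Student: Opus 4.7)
The plan is to prove all three parts by directly unpacking the definitions of $\widetilde{A}$, $\varphi_k^E$, and $v_j^{(k)}$, then swapping the order of double summations using the symmetry of the neighbor relation $j\in\nbrs(k)\iff k\in\nbrs(j)$, and finally invoking Lemma \ref{lem:boundary_extension_weighting} together with the partition of unity property of the $w_j$. No analytic estimates are required; the entire argument is bookkeeping on supports and sums.

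For Part 1, I would start from the matrix-entry formula $\widetilde{A}\ix{y,x}=\sum_{k=1}^{r}w_k\ix{x}\,\varphi_k^E\ix{y-x}$ (read off from the product-convolution definition \eqref{eq:conv_op_approx_main}), then substitute the extension formula \eqref{eq:def_varphiE} to get a double sum $\sum_k\sum_{j\in\nbrs(k)}w_k\ix{x}\,v_k^{(j)}\ix{y-x}\,\varphi_j\ix{y-x}$. Because $\{(k,j):j\in\nbrs(k)\}=\{(k,j):k\in\nbrs(j)\}$, I can exchange the roles of the two indices and collect the factor $\varphi_k\ix{y-x}$ outside; relabeling gives exactly $\sum_{k=1}^{r}\varphi_k\ix{y-x}\,W_k\ix{y,x}$ with $W_k$ as in \eqref{eq:Wj_def}.

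For Part 2, the same index swap applied to $\sum_{k=1}^{r}W_k\ix{y,x}$ yields $\sum_{j=1}^{r}w_j\ix{x}\sum_{k\in\nbrs(j)}v_j^{(k)}\ix{y-x}$. Lemma \ref{lem:boundary_extension_weighting} (with the roles of $j$ and $k$ swapped) says the inner sum equals $1$ precisely when $y-x\in\inds{\Omega}-U_j$. The key observation is that the only $j$ contributing to the outer sum have $x\in\supp(w_j)\subset U_j$; writing $y-x=y-x$ with $y\in\inds{\Omega}$ and $x\in U_j$ shows $y-x\in\inds{\Omega}-U_j$ in the Minkowski sense, so the inner sum is $1$ on every nonzero term. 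The outer sum then collapses to $\sum_{j=1}^{r}w_j\ix{x}=1$ by the partition-of-unity property of the $w_j$.

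For Part 3, I would track supports directly. If $W_k\ix{y,x}\neq 0$ then some $j\in\nbrs(k)$ has both $w_j\ix{x}\neq 0$ and $v_j^{(k)}\ix{y-x}\neq 0$. The first forces $x\in U_j\subset U_k^E$. The second forces $y-x\in\inds{\Omega}-p_k$, which is the one small subtlety worth verifying: in \eqref{eq:vj_def} the indicator is $\mathbbm{1}_{\inds{\Omega}-p_k}$ when $j=k$ and $\mathbbm{1}_{(\inds{\Omega}-p_k)\setminus(\inds{\Omega}-p_j)}$ when $j\neq k$, so in both branches $\supp(v_j^{(k)})\subset\inds{\Omega}-p_k$. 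Combined with the ambient condition $y\in\inds{\Omega}$, these are exactly the three defining conditions of $\mu_k^E$ in \eqref{eq:skew_cylinder_set}, and the containment follows. The main obstacle across the whole argument is mostly notational: one must be careful that the neighbor relation is symmetric (so the double-sum swap is legitimate) and that $v_j^{(k)}$ is supported inside $\inds{\Omega}-p_k$ rather than $\inds{\Omega}-p_j$—this asymmetric support behavior is exactly what makes the extension procedure of Section \ref{sec:boundary_extension} yield boundary-artifact-free weights.
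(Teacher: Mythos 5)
Your proposal is correct and follows essentially the same route as the paper's own proof: substitute \eqref{eq:def_varphiE} and swap the double sum via the symmetry of the neighbor relation for Part 1, invoke Lemma \ref{lem:boundary_extension_weighting} together with $\supp(w_j)\subset U_j$ and the partition of unity of the $w_j$ for Part 2, and track supports (in particular $\supp(v_j^{(k)})\subset\inds{\Omega}-p_k$ and $x\in U_j\subset U_k^E$) for Part 3. The subtlety you flag about the asymmetric support of $v_j^{(k)}$ is exactly the point the paper's proof also emphasizes.
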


\begin{proof}~
\paragraph{1} Substituting the definition of $\varphi_k^E$ from \eqref{eq:def_varphiE} into the definition of $\widetilde{A}$ from \eqref{eq:conv_op_approx_main} then performing algebraic manipulations, we have:
\begin{align}
\widetilde{A}\ix{y,x} &= \sum_{k=1}^r w_k\ix{x} \sum_{j \in \nbrs(k)} v_{k}^{(j)}\ix{y-x} \varphi_j\ix{y-x} \nonumber \\
&= \sum_{k=1}^r \sum_{j \in \nbrs(k)} w_k\ix{x} v_{k}^{(j)}\ix{y-x} \varphi_j\ix{y-x} \nonumber \\
&= \sum_{j=1}^r \sum_{k \in \nbrs(j)} w_k\ix{x} v_{k}^{(j)}\ix{y-x} \varphi_j\ix{y-x}
= \sum_{j=1}^r W_j\ix{y,x} \varphi_j\ix{y-x}. \nonumber
\end{align}
Going from the second to the third line we used the fact that
\begin{equation*}
\sum_{a\in X} \sum_{\{b : b\in X, b \sim a\}} f(a,b) = \sum_{b\in X} \sum_{\{a : a \in X, a \sim b\}} f(a,b)
\end{equation*}
for any symmetric relation $\sim$. Note the switch of $k$ and $j$.

\paragraph{2} Using the definition of $W_k$ in \eqref{eq:Wj_def}, we have
\begin{align*}
\sum_{k=1}^r W_k\ix{y,x} &= \sum_{k=1}^r \sum_{j\in\nbrs(k)} w_j\ix{x} v_{j}^{(k)}\ix{y-x} \\
&= \sum_{j=1}^r \sum_{k\in\nbrs(j)} w_j\ix{x} v_{j}^{(k)}\ix{y-x} 
= \sum_{j=1}^r w_j\ix{x} \left(\sum_{k\in\nbrs(j)} v_{j}^{(k)}\ix{y-x}\right).
\end{align*}
If $x \in U_j$ and $y\in \inds{\Omega}$, then Minkowski set arithmetic implies $y-x \in \inds{\Omega}-U_j$, so \eqref{eq:vj_support_lem} in Lemma \ref{lem:boundary_extension_weighting} implies
\begin{equation*}
\sum_{k\in\nbrs(j)} v_{j}^{(k)}\ix{y-x}=1.
\end{equation*}
Since $\supp(w_j) \subset U_j$, this implies
\begin{equation*}
\sum_{j=1}^r w_j\ix{x} \left(\sum_{k\in\nbrs(j)} v_{j}^{(k)}\ix{y-x}\right) = \sum_{j=1}^r w_j\ix{x} = 1.
\end{equation*}
Thus $\sum_{k=1}^r W_k\ix{y,x}=1$ as required.

\paragraph{3} From the definition of $v_k^{(j)}$ in \eqref{eq:vj_def}, either $\supp(v_{k}^{(j)}) = \left(\inds{\Omega} - p_j\right) \setminus \left(\inds{\Omega}-p_k\right)$ when $k \neq j$, or $\supp(v_{k}^{(j)})=\inds{\Omega}-p_j$ when $k=j$. In either case $\supp(v_{k}^{(j)}) \subset \inds{\Omega} - p_j$. Thus
\begin{equation*}
\left(y - x \notin \inds{\Omega} - p_j\right) \implies \left(v_{k}^{(j)}\ix{y - x}=0\right),
\end{equation*}
which is equivalent to the statement
\begin{equation}
\label{eq:vj0_is_implied}
\left(y - x + p_j \notin \inds{\Omega}\right) \implies \left(v_{k}^{(j)}\ix{y - x}=0\right).
\end{equation}
Since $W_k$ consists of a sum of terms, each term containing $v_{j}^{(k)}\ix{y - x}$, statement \eqref{eq:vj0_is_implied} implies (note the swap of $k,j$):
\begin{equation}
\label{eq:Wj_pyx_in_U}
\left(y - x + p_k \notin \inds{\Omega}\right) \implies \left(W_k\ix{y,x} =0\right).
\end{equation}
Additionally, since each $w_j$ in the sum defining $W_k$ is supported in the blocky neighborhood $U_j$, and since the union of these blocky neighborhoods $U_j$ is $U_k^E$, we have
\begin{equation}
\label{eq:Wj_x_in_UE}
\left(x \notin U_k^E\right) \implies \left(W_k\ix{y,x} =0\right).
\end{equation}
Altogether, \eqref{eq:Wj_pyx_in_U}, \eqref{eq:Wj_x_in_UE}, and the definition of $\mu_k^E$ in \eqref{eq:skew_cylinder_set} imply $\supp(W_k) \subset \mu_k^E$.
\end{proof}

\begin{prop}
\label{prop:AAWF}	
The pointwise error in our product-convolution approximation takes the following form:
\begin{equation}
\widetilde{A}\ix{y,x}-A\ix{y,x} = \sum_{k:(y,x) \in \mu_k^E} W_k\ix{y,x} F_k\ix{y,x}.
\end{equation}
\end{prop}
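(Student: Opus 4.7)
The plan is to combine the two identities from Proposition \ref{prop:weighting_product_space} (the rewriting of $\widetilde{A}$ as a $W_k$-weighted sum of the original $\varphi_k$, and the partition of unity property) with the definition of $F_k$, and then use the support property $\supp(W_k)\subset\mu_k^E$ to prune the sum.

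First, I would invoke Part 1 of Proposition \ref{prop:weighting_product_space} to write
\begin{equation*}
\widetilde{A}\ix{y,x} = \sum_{k=1}^r W_k\ix{y,x}\,\varphi_k\ix{y-x}.
\end{equation*}
Next, I would use Part 2 (partition of unity) to rewrite
\begin{equation*}
A\ix{y,x} = \left(\sum_{k=1}^r W_k\ix{y,x}\right) A\ix{y,x} = \sum_{k=1}^r W_k\ix{y,x}\, A\ix{y,x}.
\end{equation*}
Subtracting and collecting terms gives
\begin{equation*}
\widetilde{A}\ix{y,x} - A\ix{y,x} = \sum_{k=1}^r W_k\ix{y,x}\left(\varphi_k\ix{y-x} - A\ix{y,x}\right).
\end{equation*}

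The main obstacle is that $\varphi_k\ix{y-x}$ is only intrinsically defined when $y-x\in\inds{\Omega}-p_k$, i.e.\ when $y-x+p_k\in\inds{\Omega}$; otherwise the identity $\varphi_k\ix{y-x}=A\ix{y-x+p_k,p_k}$ (which is what lets us recognize $F_k$) is not meaningful without an extension convention. This is precisely where Part 3 of Proposition \ref{prop:weighting_product_space} rescues the argument: $\supp(W_k)\subset\mu_k^E$, and on $\mu_k^E$ we have by definition $x\in U_k^E$, $y\in\inds{\Omega}$, and $y-x+p_k\in\inds{\Omega}$. Thus for every $(y,x)\notin\mu_k^E$ the corresponding term in the sum vanishes, and for every $(y,x)\in\mu_k^E$ the value $\varphi_k\ix{y-x}=A\ix{y-x+p_k,p_k}$ is unambiguous.

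Restricting the sum to indices $k$ with $(y,x)\in\mu_k^E$ and substituting $\varphi_k\ix{y-x}=A\ix{y-x+p_k,p_k}$ from \eqref{eq:varphi_p_section2}, together with the definition of $F_k$ in \eqref{eq:translation_failure_operator}, yields
\begin{equation*}
\widetilde{A}\ix{y,x} - A\ix{y,x} = \sum_{k:(y,x)\in\mu_k^E} W_k\ix{y,x}\bigl(A\ix{y-x+p_k,p_k}-A\ix{y,x}\bigr) = \sum_{k:(y,x)\in\mu_k^E} W_k\ix{y,x}\,F_k\ix{y,x},
\end{equation*}
which is the claim. The proof is essentially a bookkeeping argument once Proposition \ref{prop:weighting_product_space} is in place; no new analytic estimates are needed.
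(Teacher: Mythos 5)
Your proposal is correct and follows essentially the same route as the paper's proof: both invoke Part 1 of Proposition \ref{prop:weighting_product_space} to rewrite $\widetilde{A}$ in terms of the original $\varphi_k$, use the partition of unity (Part 2) to distribute $A\ix{y,x}$ over the sum, identify $\varphi_k\ix{y-x}=A\ix{y-x+p_k,p_k}$ to recognize $F_k$, and use $\supp(W_k)\subset\mu_k^E$ (Part 3) to restrict the sum. Your explicit remark on why the support property is what makes $\varphi_k\ix{y-x}$ well-defined on the surviving terms is a nice touch but does not change the argument.
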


\begin{proof}
From Proposition \ref{prop:weighting_product_space} and the fact that $\varphi_k\ix{z}=A\ix{z+p_k,p_k}$, we know that 
\begin{equation*}
\widetilde{A}\ix{y,x} = \sum_{k=1}^r W_k\ix{y,x} A\ix{y-x+p_k,p_k},
\end{equation*}
Hence the pointwise error in the approximation takes the following form:
\begin{align*}
\widetilde{A}\ix{y,x}-A\ix{y,x} &= \sum_{k=1}^r W_k\ix{y,x} A\ix{y-x+p_k,p_k} - A\ix{y,x} \\
&= \sum_{k=1}^r W_k\ix{y,x} \left(A\ix{y-x+p_k,p_k} - A\ix{y,x}\right) \\
&= \sum_{k=1}^r W_k\ix{y,x} F_k\ix{y,x} = \sum_{k:(y,x) \in \mu_k^E} W_k\ix{y,x} F_k\ix{y,x}
\end{align*}
Going from the first line to the second line we used the partition of unity property of $W_k$ from Proposition \ref{prop:weighting_product_space}. Going from the second to the third line we used the definition of $F_k$. In the last equality on the third line we used the fact that $\supp\left(W_k\right) \subset \mu_k^E$.
\end{proof}

\begin{thm}
\label{thm:a_priori_convolution_error}
We have
\begin{equation}
\label{eq:thm5_overallbound}
\| \widetilde{A}-A\| \le \|F\|.
\end{equation}
\end{thm}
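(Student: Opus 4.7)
My plan is to reduce the theorem to a pointwise bound and then pass to the Frobenius norm. The key ingredients are already assembled: Proposition \ref{prop:AAWF} represents the pointwise error as a $W_k$-weighted combination of the $F_k$, and Proposition \ref{prop:weighting_product_space} says the $W_k$ form a partition of unity with $\supp(W_k)\subset \mu_k^E$. The only property not stated explicitly but which I need is non-negativity of the $W_k$. This follows because the harmonic weighting functions $w_j$ are nonnegative (they solve Laplace problems with $0/1$ boundary values, so by the discrete maximum principle they lie in $[0,1]$), and each $v_j^{(k)}$ is either $0$ or $1/c_j\ix{\cdot}>0$ by its definition in \eqref{eq:vj_def}. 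Hence each summand in \eqref{eq:Wj_def} is nonnegative.

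The first step is the pointwise bound. Fix $(y,x)\in\inds{\Omega}\times\inds{\Omega}$. By Proposition \ref{prop:AAWF} and the triangle inequality,
\begin{equation*}
|\widetilde{A}\ix{y,x}-A\ix{y,x}| \le \sum_{k:(y,x)\in\mu_k^E} W_k\ix{y,x}\, |F_k\ix{y,x}|.
\end{equation*}
Because $W_k\ix{y,x}\ge 0$ and $\sum_{k:(y,x)\in\mu_k^E} W_k\ix{y,x}=1$ (the partition of unity restricted to indices where $W_k$ is not automatically zero), the right hand side is a convex combination of the numbers $|F_k\ix{y,x}|$ for those $k$ with $(y,x)\in\mu_k^E$, and is therefore bounded above by their maximum. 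By the definition of $F$ in \eqref{eq:defn_of_F}, this maximum is exactly $F\ix{y,x}$, yielding
\begin{equation*}
|\widetilde{A}\ix{y,x}-A\ix{y,x}| \le F\ix{y,x} \quad \text{for all }(y,x)\in\inds{\Omega}\times\inds{\Omega}.
\end{equation*}

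The second and final step is to square and sum this pointwise bound over $(y,x)\in\inds{\Omega}\times\inds{\Omega}$ to obtain the Frobenius norm inequality $\|\widetilde{A}-A\|^2 \le \|F\|^2$, and take square roots.

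I do not expect a real obstacle here; essentially all of the work was front-loaded into Lemma \ref{lem:box_lemma}, Lemma \ref{lem:boundary_extension_weighting}, and Propositions \ref{prop:weighting_product_space} and \ref{prop:AAWF}, which together ensure that the pointwise error can be written as a convex combination over indices with $(y,x)\in\mu_k^E$ (rather than merely over $k$ with $x\in U_k^E$). That is precisely what makes the bound in terms of $F$ well-defined without resorting to zero-extension of $A$, and is where the boundary-extension construction of Section \ref{sec:boundary_extension} pays off. The only thing I had to supply beyond those results was the observation that the $W_k$ are nonnegative, which is immediate from how $w_j$ and $v_j^{(k)}$ are built.
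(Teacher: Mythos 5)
Your proof is correct and follows essentially the same route as the paper: apply Proposition \ref{prop:AAWF}, bound the pointwise error by $F\ix{y,x}$ using the partition-of-unity and support properties of the $W_k$, and sum over entries to get the Frobenius bound. The one thing you add that the paper leaves implicit is the non-negativity of the $W_k$ (needed to pass from the convex combination to the maximum), and your justification via the discrete maximum principle for the $w_j$ and the definition \eqref{eq:vj_def} of the $v_j^{(k)}$ is exactly right.
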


\begin{proof}
Using the result of Proposition \ref{prop:AAWF}, the fact that $W_k$ form a partition of unity, and the definition of $F$ yields the pointwise error bound
\begin{align*}
|\widetilde{A}\ix{y,x}-A\ix{y,x}| &= \left|\sum_{k:(y,x) \in \mu_k^E} W_k\ix{y,x} F_k\ix{y,x}\right| \\
&\le \max_{k:(y,x) \in \mu_k^E} |F_k\ix{y,x}| = F\ix{y,x}.
\end{align*}
The overall bound, \eqref{eq:thm5_overallbound}, follows directly from the definition of the norm and this pointwise bound.
\end{proof}

\begin{remark}
	Let 
	\begin{equation*}
	T\ix{y,x} := A\ix{y+x,x}
	\end{equation*}
	be the spatially varying impulse response function (see, e.g., \cite{BigotEscandeWeiss16} for a more in-depth discussion of the SVIR). Under the change of variables $h:=p-x$, $\xi:=y-x$, we may express the failure of local translation invariance in terms of the SVIR as follows:
	\begin{equation*}
	A\ix{y-x+p,p} - A\ix{y,x} = -\left(T\ix{\xi,p+h} - T\ix{\xi,p}\right).
	\end{equation*}
	If $x$ is near $p$, then $h$ is small, so
	\begin{equation*}
	T\ix{\xi,p+h} - T\ix{\xi,p} \approx \frac{dT}{dp}(\xi, p)h.
	\end{equation*}
	Hence, if our scheme is applied to a discretization of a continuous operator, the smoother the function $x \mapsto T\ix{y,x}$ is, the better our scheme will perform.
\end{remark}

\section{Numerical examples}
\label{sec:numerical_examples}

We numerically test our scheme on a spatially varying blur operator (Section \ref{sec:spatial_blur}), on the non-local component of the Schur complement associated with restricting the Poisson operator to an internal interface (Section \ref{sec:poisson_schur_mid}), and on the data misfit Hessian for an advection-diffusion inverse problem (Section \ref{sec:hessian}). For the spatially varying blur operator, our scheme refines towards the boundary between blur kernels and refines almost nowhere else, therefore outperforming the standard non-adaptive scheme which refines everywhere uniformly. For the Poisson interface Schur complement, our scheme is mesh scalable: it requires roughly the same convolution rank (number of terms in \eqref{eq:conv_op_approx_main}) to achieve a desired error tolerance regardless of how fine the mesh is. For the Hessian, our scheme is data scalable: it requires roughly the same convolution rank to achieve a desired error tolerance regardless of how informative the data are about the unknown parameter in the inverse problem. For both the Poisson Schur complement and the Hessian, we show that our scheme, in combination with $H$-matrix methods, can be used to build good preconditioners. 
Additionally, we find that the randomized a-posteriori error estimator achieves good performance with only a handful of random samples: our scheme performs almost as well with $q=5$ as it does with $q=100$.

For $H$-matrices, we use the standard coordinate splitting nested-bisection binary cluster tree\footnote{Degrees of freedom are split into two equally-sized clusters by a hyperplane normal to widest coordinate direction for that cluster. Then each cluster is split into two smaller clusters in the same way, and so on, recursively. The splitting continues until the number of degrees of freedom in a cluster is less than $32$.}, and the standard diameter-less-than-distance admissibilty condition\footnote{We mark a block of the matrix as low rank (admissible) if the distance between the degree of freedom cluster associated with the rows of the block and the diameter of the degree of freedom cluster associated with the columns of the block is less than or equal to the diameter of the smaller of the two degree of freedom clusters.}.

\subsection{Spatially varying blur}
\label{sec:spatial_blur}

\paragraph{Problem setup} Let $a$ be the following spatially varying blurring kernel,
\begin{equation*}
a(s,t) := \exp\left(-\frac{s^2 + t^2}{2 \sigma^2(s,t)}\right), \quad \text{where} \quad \sigma(s,t) = \begin{cases}
0.1, & s^2 + t^2 < 0.5, \\
0.2, & s^2 + t^2 \ge 0.5.
\end{cases}
\end{equation*}
Here $A$ is the matrix generated by sampling $a$ on $[-1,1]^2$ with a $75 \times 75$ equally spaced regular grid. 

\begin{figure}
\begin{subfigure}[b]{0.55\textwidth}
	\center
    \includegraphics[scale=0.55]{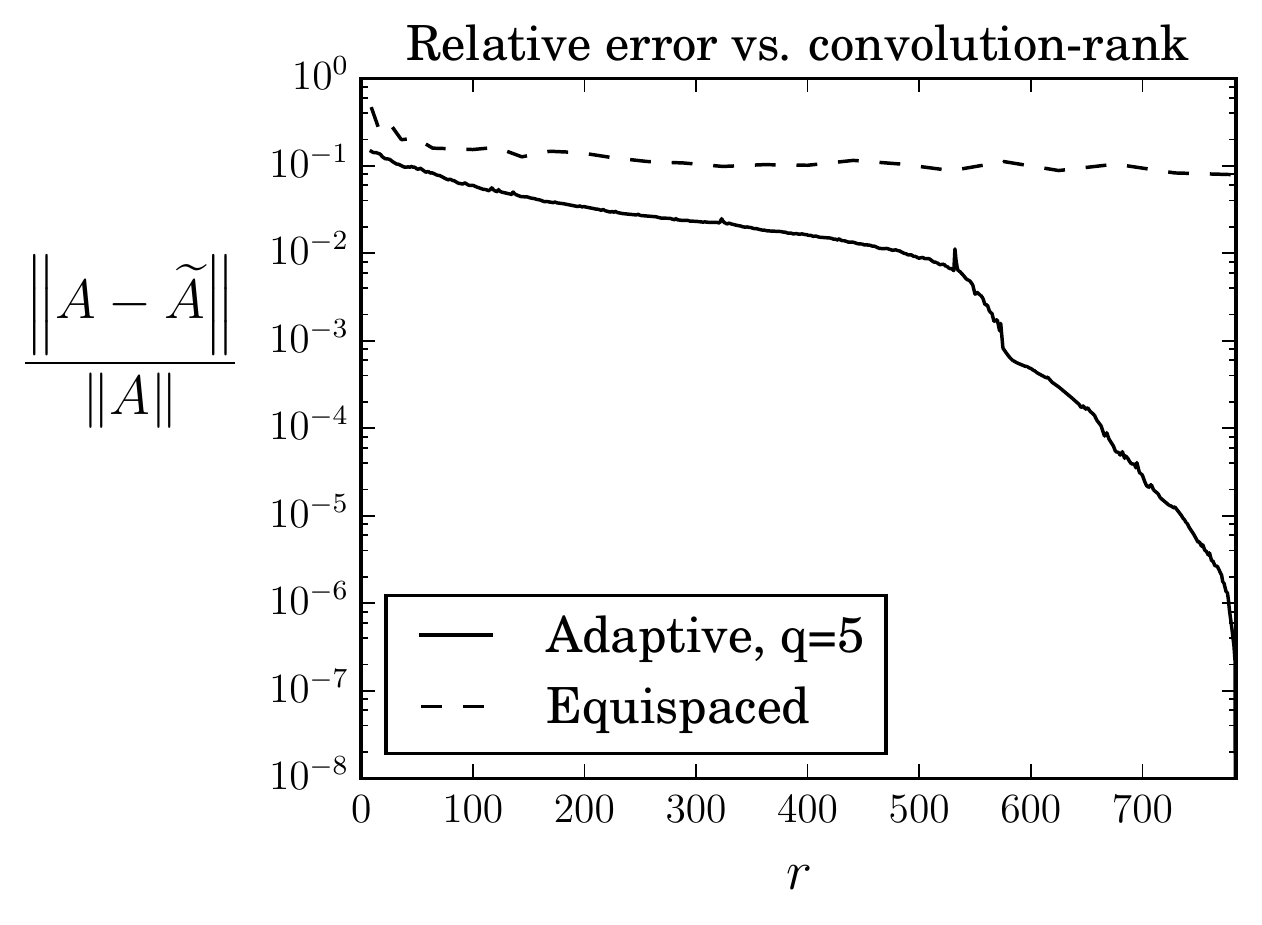}
    \caption{}
    \label{fig:spatially_varying_blur_convergence}
\end{subfigure}
\hfill
\begin{subfigure}[b]{0.44\textwidth}
	\center
    \includegraphics[scale=0.55]{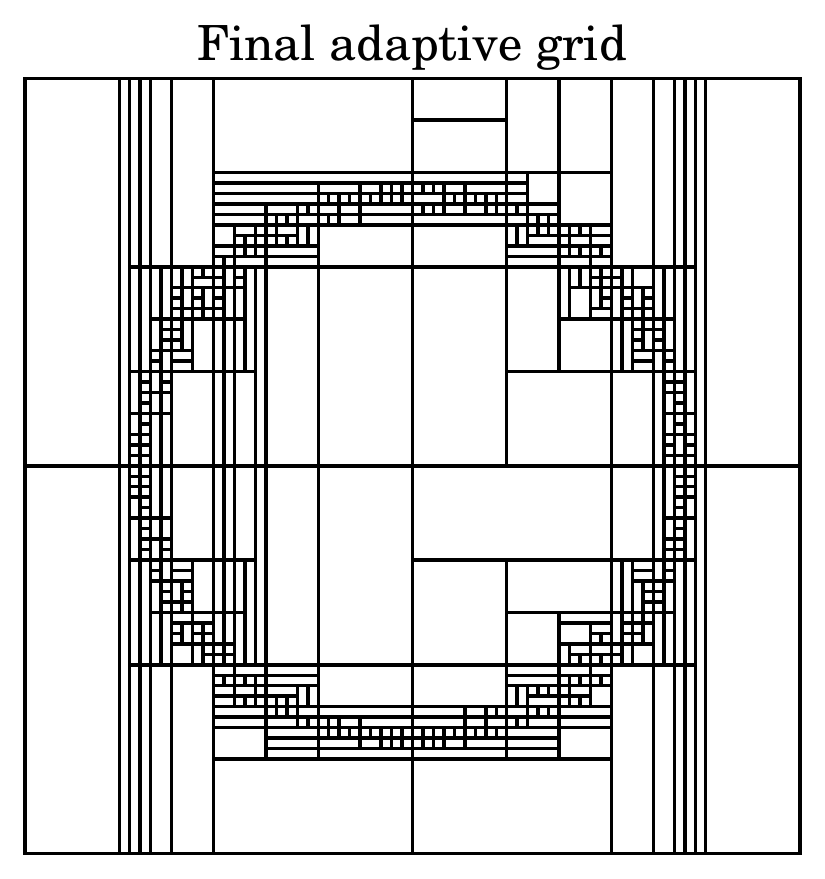}
    \caption{}
    \label{fig:spatially_varying_blur_grid}
\end{subfigure}
\caption{\textbf{Spatially varying blur:} Product-convolution approximation of the spatially varying blur operator defined in Section \ref{sec:spatial_blur}. (a) Convergence of our adaptive scheme, compared to convergence of standard product-convolution approximation with an equispaced regular grid of sample points and local bilinear interpolant weighting functions. (b) Final grid generated by our adaptive scheme.}
\label{fig:spatially_varying_blur}
\end{figure}

\paragraph{Results} Figure \ref{fig:spatially_varying_blur_convergence} compares product-convolution approximation of $A$ using our adaptive scheme, versus standard product-convolution approximation of $A$ using an equally spaced regular grid of sample points, with bilinear interpolation of impulse response functions, no adaptivity and no boundary extension procedure. Our adaptive scheme converges much faster than the regular grid scheme. 

Figure \ref{fig:spatially_varying_blur_grid} shows the final grid generated by our adaptive scheme, in which the boundary of the circle $s^2 + t^2 = 1$ is fully resolved with $2 \times 2$ cells. Error in the adaptive procedure is zero (within machine epsilon) for this final grid.

\subsection{Poisson interface Schur complement}
\label{sec:poisson_schur_mid}

\paragraph{Problem setup} Here we consider the discretized (negative) Laplace operator $K \approx -\Delta$ on the interior of the cube, $(-1,1)^3$. To build $K$, we discretize the Laplace operator on the whole cube, $[-1,1]^3$ with piecewise linear finite elements on a regular $n \times n \times n$ mesh of tetrahedra, so that there are $(n+1)^3$ mesh gridpoints. Then we exclude rows and columns from the resulting matrix that correspond to boundary degrees of freedom. The resulting $(n-1)^2 \times (n-1)^2$ matrix, $K$, is the coefficient matrix for the linear system that would need to be solved to determine the solution on the interior degrees of freedom for the Poisson problem in the cube with Dirichlet boundary conditions. 

Let `$i$' denote the degrees of freedom on the interface hyperplane at $z=0$ that separates\footnote{We choose $n$ even so that the interface is at $z=0$, rather than being slightly offset.} the degrees of freedom in the top half of the cube from the bottom half of the cube. Let `$t$' denote the degrees of freedom in the top half of the cube ($z > 0$), and let `$b$' denote degrees of freedom in the bottom half of the cube ($z < 0$), not including the interface in both cases. Denote the associated blocks of $K$ by $K_{it}$, $K_{tt}$, $K_{ti}$, $K_{ib}$ and so forth. We use our adaptive product-convolution scheme to approximate the operator
\begin{equation*}
A \coloneqq K_{it}K_{tt}^{-1}K_{ti} + K_{ib}K_{bb}^{-1}K_{bi}.
\end{equation*}
The matrix $-A$ is the non-local component of the Schur complement for degrees of freedom on the interface hyperplane, i.e., the matrix
\begin{equation*}
S \coloneqq K_{ii} - K_{it}K_{tt}^{-1}K_{ti} - K_{ib}K_{bb}^{-1}K_{bi}.
\end{equation*}
Matrix entries of $A$ are not directly available; we apply $A$ to vectors by performing matrix-vector products with $K_{bi}$, $K_{ib}$, $K_{ti}$, and $K_{it}$, and solving linear systems with $K_{tt}$ and $K_{bb}$ as the coefficient matrices. After approximating $A$ with $\widetilde{A}$ using our product-convolution scheme, we also construct the Schur complement approximation
\begin{equation*}
\widetilde{S} := K_{ii} - \widetilde{A}.
\end{equation*}
Such Schur complement approximations could be constructed recursively. One would subdivide the top and bottom subdomains, then subdivide the subdivisions, and so on. Approximations of Schur complements at deeper levels of the recursion would be used when constructing approximations at shallower levels. Here we only present results for one subdivision.

\begin{figure}
	\begin{minipage}[t]{0.54\textwidth}
		\mbox{}\\[-\baselineskip]
		\includegraphics[scale=0.48]{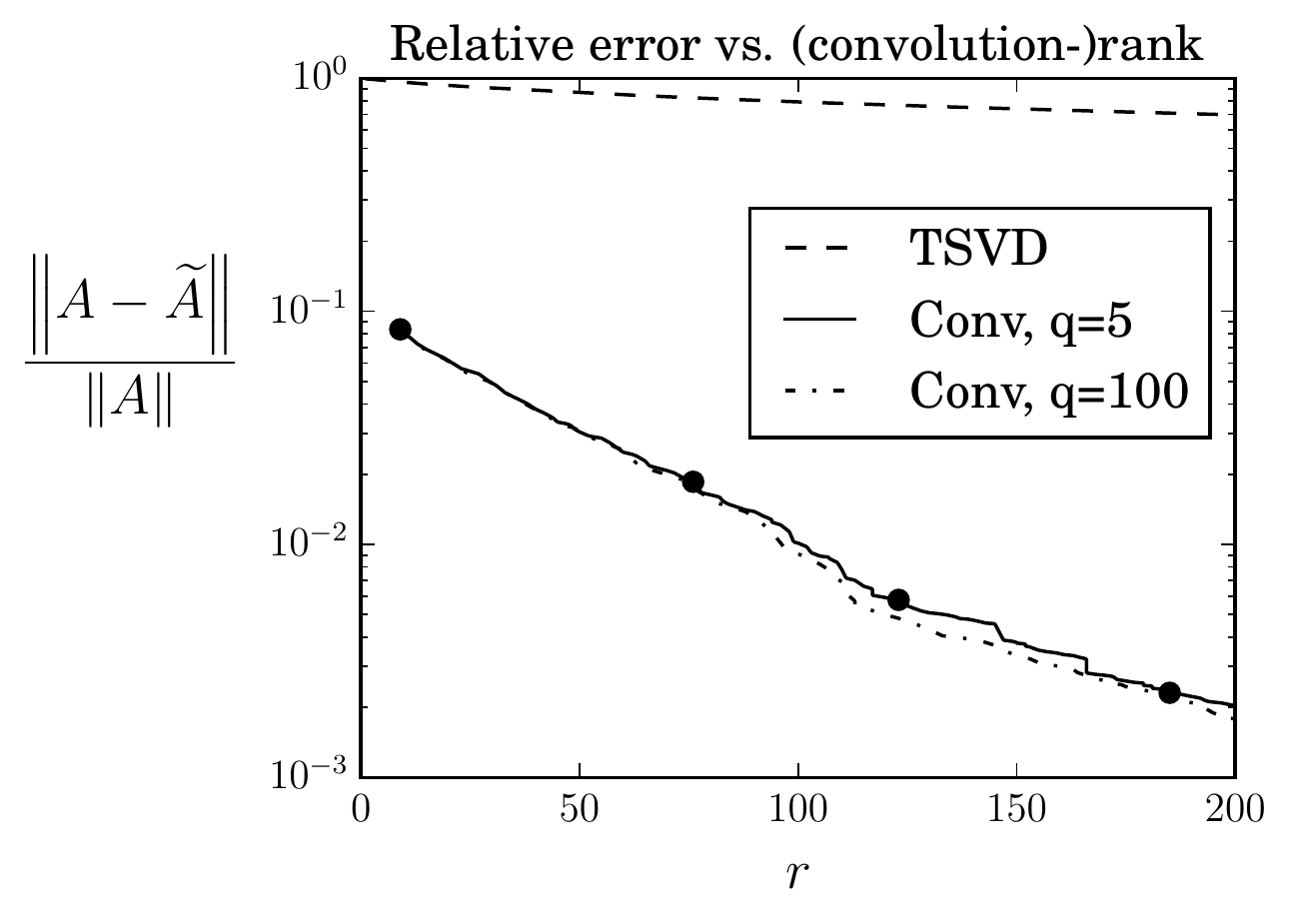}
	\end{minipage}\hfill
	\begin{minipage}[t]{0.43\textwidth}
		\mbox{}\\[-\baselineskip]
			\caption{\textbf{Poisson Schur complement:}  Relative error in truncated SVD low-rank approximation (`TSVD') compared to our product-convolution approximation ('Conv') as the (convolution) rank, $r$, changes. We show convergence curves for our scheme using both $q=5$ and $q=100$ random samples for the a-posteriori error estimator. Black dots correspond to the adaptive grids visualized in Figure \ref{fig:poisson_adaptive_grid}.}
	\label{fig:poisson_convergence_r}
	\end{minipage}
\end{figure}


\begin{figure}
	\begin{subfigure}[b]{0.24\textwidth}
		\center
		\includegraphics[scale=0.2]{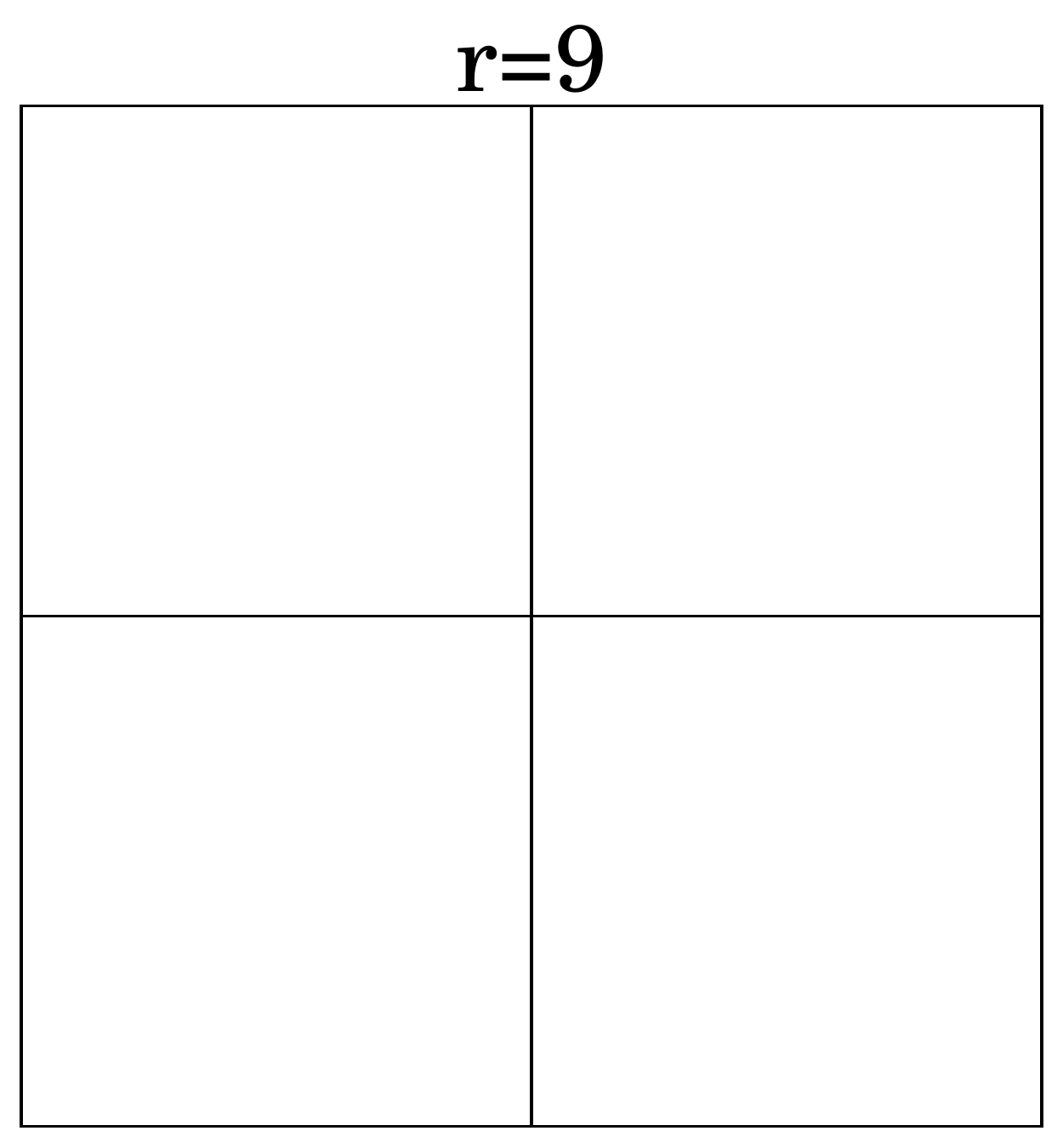}
		\label{fig:adaptive_grid_super_coarse}
	\end{subfigure}
	\begin{subfigure}[b]{0.24\textwidth}
		\center
		\includegraphics[scale=0.2]{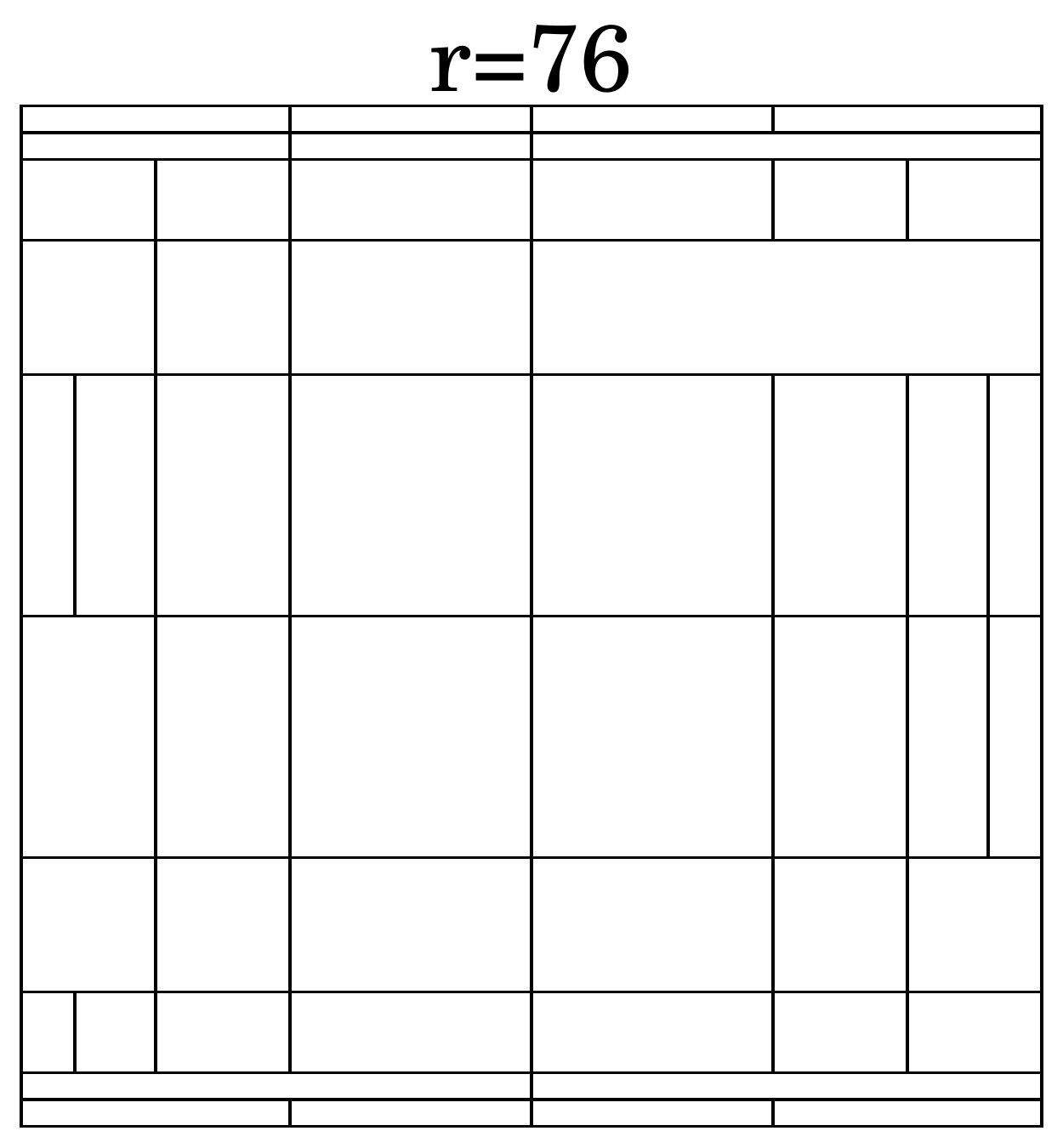}
		\label{fig:adaptive_grid_coarse}
	\end{subfigure}
	\begin{subfigure}[b]{0.24\textwidth}
		\center
		\includegraphics[scale=0.2]{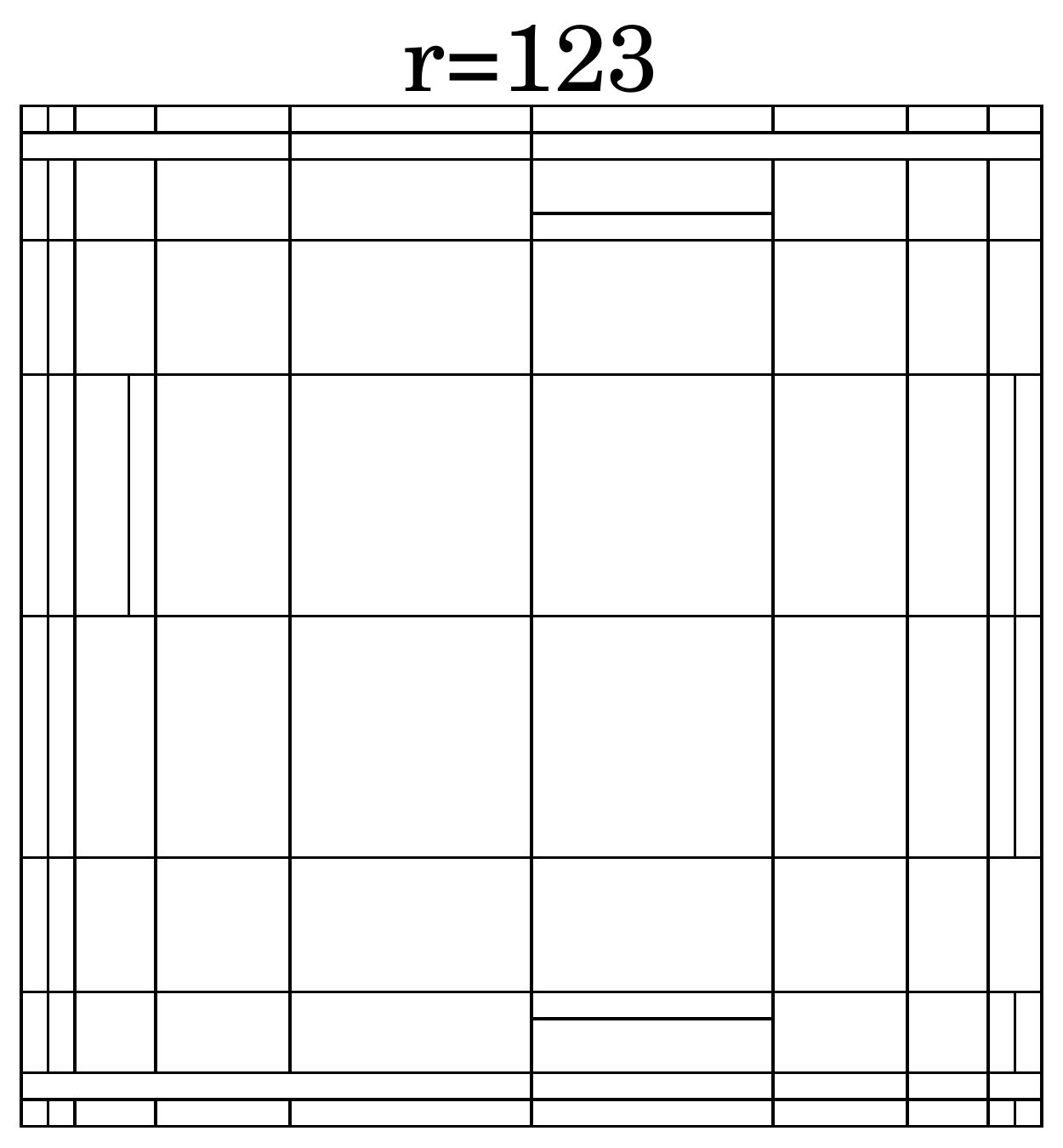}
		\label{fig:adaptive_grid_medium}
	\end{subfigure}
	\begin{subfigure}[b]{0.24\textwidth}
		\center
		\includegraphics[scale=0.2]{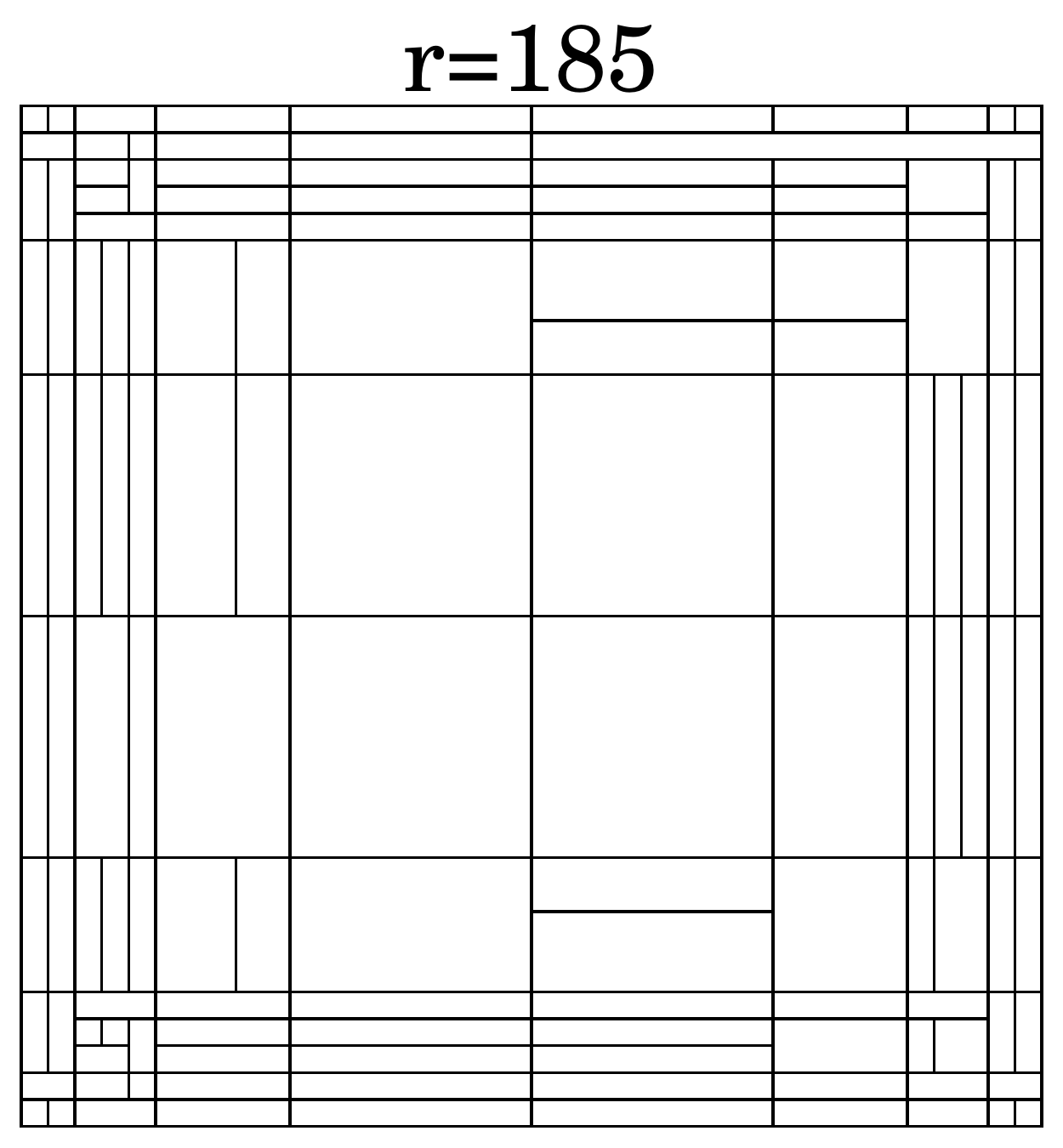}
		\label{fig:adaptive_grid_fine}
	\end{subfigure}
	\caption{\textbf{Poisson Schur complement:} Intermediate stages of adaptive grid refinement corresponding to black dots in Figure \ref{fig:poisson_convergence_r}.}
	\label{fig:poisson_adaptive_grid}
\end{figure}


\paragraph{Results} Figure \ref{fig:poisson_convergence_r} compares the convergence of our scheme to truncated SVD (`TSVD') approximation for $n=40$ ($N = (n-1)^2 = 1521$). Since the Poisson Schur complement is high rank, TSVD performs poorly. In contrast, our scheme performs well: at $r=200$ our scheme has less than $0.03$\% error, whereas TSVD has approximately $69$\% error. Figure \ref{fig:poisson_convergence_r} also shows that our scheme performs well even when we use a small number of random samples for the a-posteriori error estimator: the convergence curve for $q=5$ is almost identical to the convergence curve for $q=100$. Figure \ref{fig:poisson_adaptive_grid} displays the adaptive meshes from four different stages of the adaptive refinement process from Figure \ref{fig:poisson_convergence_r}. Our scheme adaptively refines towards the boundary, then the corners. This is expected since boundary effects are the only source of translation-invariance failure. 

Figure \ref{fig:poisson_mesh_scalability} compares our scheme to TSVD on a sequence of progressively finer meshes, from $h\approx 0.1$ to $h\approx 0.01$, where $h$ is the distance between adjacent gridpoints in the mesh. The curves show the (convolution) rank, $r$, required to achieve a relative error tolerance of $5$\%. The rank for TSVD grows with the number of degrees of freedom on the top surface ($r \sim O(1/h^2)$), offering little improvement over directly building a dense matrix representation of $A$ column-by-column. In contrast, the convolution rank for our scheme remains small for all $h$ considered.

\begin{figure}
	\begin{minipage}[t]{0.54\textwidth}
		\mbox{}\\[-\baselineskip]
		\includegraphics[scale=0.6]{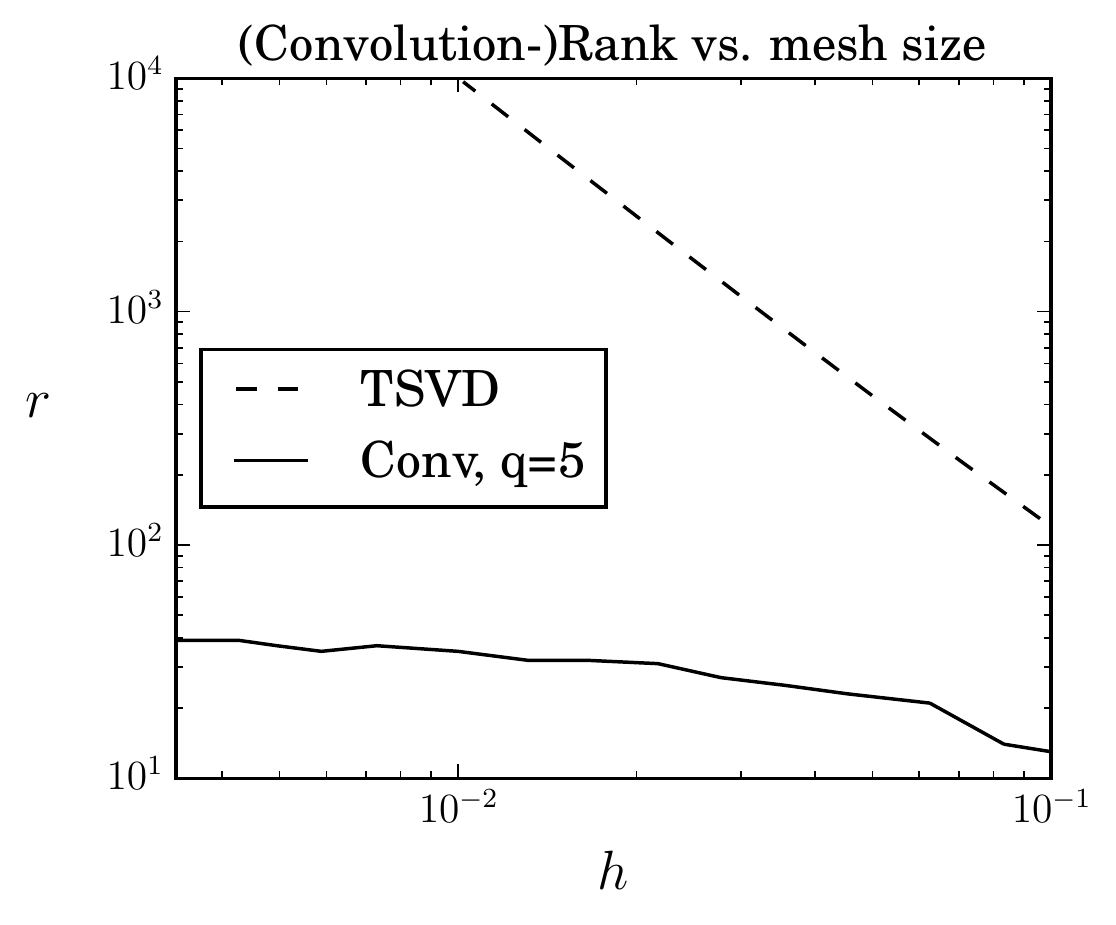}
	\end{minipage}\hfill
	\begin{minipage}[t]{0.43\textwidth}
		\mbox{}\\[-\baselineskip]
		\caption{\textbf{Poisson Schur complement:} The (convolution) rank, $r$, required to achieve a relative approximation error of $5$\%, for a variety of mesh sizes, $h$. `TSVD' indicates truncated SVD low rank approximation, and 'Conv' indicates our product-convolution scheme.} \label{fig:poisson_mesh_scalability}
	\end{minipage}
\end{figure}

\begin{figure}
	\begin{minipage}[t]{0.54\textwidth}
		\mbox{}\\[-\baselineskip]
		\includegraphics[scale=0.55]{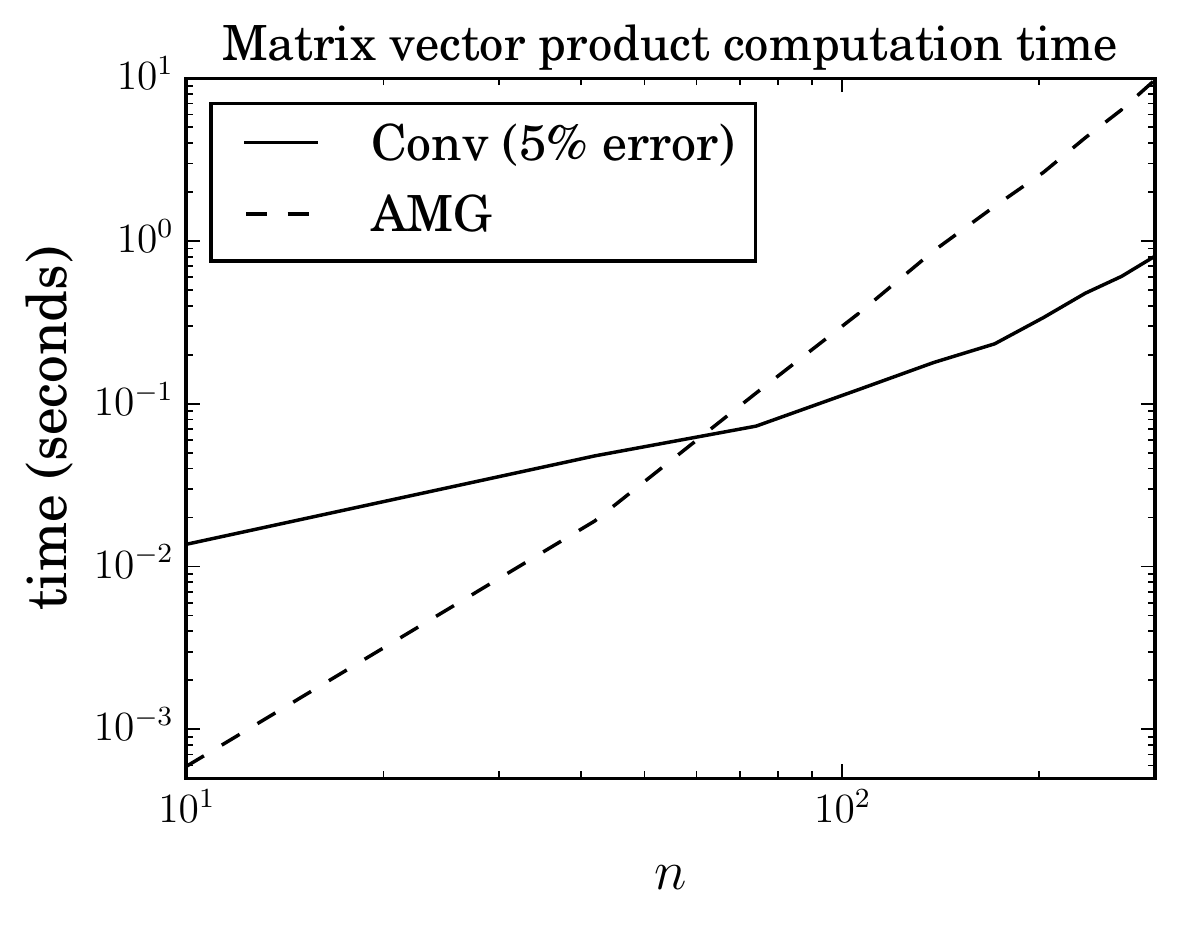}
	\end{minipage}\hfill
	\begin{minipage}[t]{0.43\textwidth}
		\mbox{}\\[-\baselineskip]
			\caption{\textbf{Poisson Schur complement:}  The time required to apply $\widetilde{A}$ to a vector using the FFT to compute the convolutions (`Conv'), compared to the time required to apply $A$ to a vector, using multigrid to apply the matrices $K_{tt}^{-1}$ and $K_{bb}^{-1}$ to vectors (`AMG'). For our product-convolution scheme, the average slope between $n=171$ and $n=300$ (from the final upturn to the end, containing 5 equally spaced $n$) is $2.2$, suggesting an asymptotic cost of $O(n^{2.2})$ (theory predicts $O(n^2 \log n)$). For algebraic multigrid, the average slope between $n=171$ and $n=300$ is $3.2$, suggesting an asymptotic cost of $O(n^{3.2})$ (theory predicts $O(n^3)$).} 
		\label{fig:poisson_matvec_timing}
		\end{minipage}		
\end{figure}

Figure \ref{fig:poisson_matvec_timing} compares the time required to apply $A$ to a vector, versus the time required to apply $\widetilde{A}$ to a vector. When applying $A$ to vectors, we solve the necessary linear systems with $K_{tt}$ and $K_{bb}$ as coefficient operators using PyAMG's \cite{OlSc2018} rootnode algebraic multigrid. When applying $\widetilde{A}$ to vectors, we use the FFT, as discussed in Section \ref{sec:apply_H_to_vectors}. For large $n$, applying $\widetilde{A}$ to a vector is much cheaper than applying $A$ to a vector.

In Table \ref{tbl:poisson_condition_number} we compare the condition number of the Schur complement, $S$, with the condition numbers of the preconditioned Schur complement, $\widetilde{S}^{-1} S$, for $n \times n \times n$ meshes ranging from $n=10$ to $n=100$. Here $\widetilde{S}^{-1}$ is constructed by converting $\widetilde{S}$ to $H$-matrix format, then inverting it using $H$-matrix arithmetic. Here, we use a tolerance of $10^{-6}$ for the low-rank approximations performed during $H$-matrix construction and arithmetic. The condition number of the (unpreconditioned) Schur complement grows as $O(1/h)$, where $h \approx 1/n$ is the mesh size. In contrast, the preconditioned Schur complement remains extremely well conditioned: the largest value of $\cond\left(\widetilde{S}^{-1} S\right)$ is $1.9$ for all meshes considered. 

\begin{table}
	\centering
	\begingroup
	\renewcommand*{\arraystretch}{1.0}
	\begin{tabular}{c|c|c|c}
		$n$ & $\cond \left(S\right)$ & $\cond\left(\widetilde{S}^{-1} S\right)$ & $r$\\
		\hline
		$10$  & $10.3$  & $1.1$ & $9$  \\
		$20$  & $21.3$  & $1.2$ & $20$ \\
		$30$  & $32.2$  & $1.3$ & $27$ \\
		$40$  & $43.0$  & $1.4$ & $28$ \\
		$50$  & $53.8$  & $1.5$ & $31$ \\
		$60$  & $64.5$  & $1.5$ & $33$ \\
		$70$  & $75.3$  & $1.8$ & $32$ \\
		$80$  & $86.1$  & $1.8$ & $35$ \\
		$90$  & $96.9$  & $1.8$ & $35$ \\
		$100$ & $107.7$ & $1.9$ & $35$
	\end{tabular}
	\endgroup
	\caption{\textbf{Poisson Schur complement:} Comparison of condition numbers for the Poisson interface Schur complement for a range of $n \times n \times n$ meshes. $S$ is the unpreconditioned Schur complement. $\widetilde{S}$ is the approximate Schur complement generated by replacing the nonlocal terms, $A$, within the Schur complement, with our convolution aproximation, $\widetilde{A}$, with a $5$\% relative error tolerance. The last column shows $r$, the convolution-rank of $\widetilde{A}$.}  
	\label{tbl:poisson_condition_number}
\end{table}

\subsection{Advection-diffusion inverse problem Hessian}
\label{sec:hessian}
\paragraph{Problem setup} In this section we approximate the data misfit portion of the Hessian for an advection-diffusion inverse problem in which an unknown initial concentration, $m$, of a contaminant, $u$, is inferred from time series data, $y$, of the contaminant flowing through a boundary, $\Gamma$. Specifically, consider the following PDE:
\begin{equation}
\label{eq:adv_diff_pde}
\begin{cases}
\frac{\partial u}{\partial t} =  \frac{1}{\mathrm{Pe}} \Delta u - \left(\begin{smallmatrix}0 \\ 1\end{smallmatrix}\right) \cdot \nabla u, & t \in [0,1], \\
u = m, & t=0,
\end{cases}
\end{equation}
where $\mathrm{Pe}$ is the Peclet number. The region of interest and support of $m$ is the unit square, $\Omega=[0,1]^2$, and the desired unbounded domain for the PDE is $\mathbb{R}^2$. To simulate the effect of having an unbounded domain, we extend the computational domain beyond $[0,1]^2$ on all sides and use Neumann boundary conditions on the outer, larger, domain. We use $y$ to denote the known noisy time series observations of $u$ on the top boundary: $y(x,t) = u(x,t) + \zeta, ~ x \in \Gamma, ~ t \in (0,1]$, where $\Gamma :=[0,1] \times \{1\}$ and $\zeta$ is $1$\% independent and identically distributed Gaussian noise. 

The inverse problem is: given $y$, determine $m$. This is commonly formulated as a least squares optimization problem of the following form:
\begin{equation}
\label{eq:optimization_problem}
\min_{m} ~ J(m) + R(m),
\end{equation}
where $J$ is the data misfit
\begin{equation*}
J(m) := \frac{1}{2}\int_0^1 \!\! \int_\Gamma \left(u(m) - y\right)^2 dx ~dt,
\end{equation*}
and $R(m)$ is a quadratic regularization term. Here $u(m)$ denotes the solution of \eqref{eq:adv_diff_pde} as a function of $m$. We use Laplacian regularization, $R(m) = \frac{\alpha}{2} m^T \Delta m$, where $\Delta$ is a discretization of the Laplacian operator with zero Dirichlet boundary conditions, and $\alpha=10^{-3}$ is the regularization parameter. This value of $\alpha$ was chosen since it satisfies the Morozov discrepancy principle \cite{morozov1984methods} to within a $5$\% tolerance for all Peclet numbers considered. For discretization, we use piecewise linear finite elements defined on a regular rectilinear $100 \times 100$ mesh of triangles, with $100$ time steps. We use backward Euler time stepping and SUPG stabilization \cite{BrooksHughes82}.

\begin{figure}
	\center
    \includegraphics[scale=0.5]{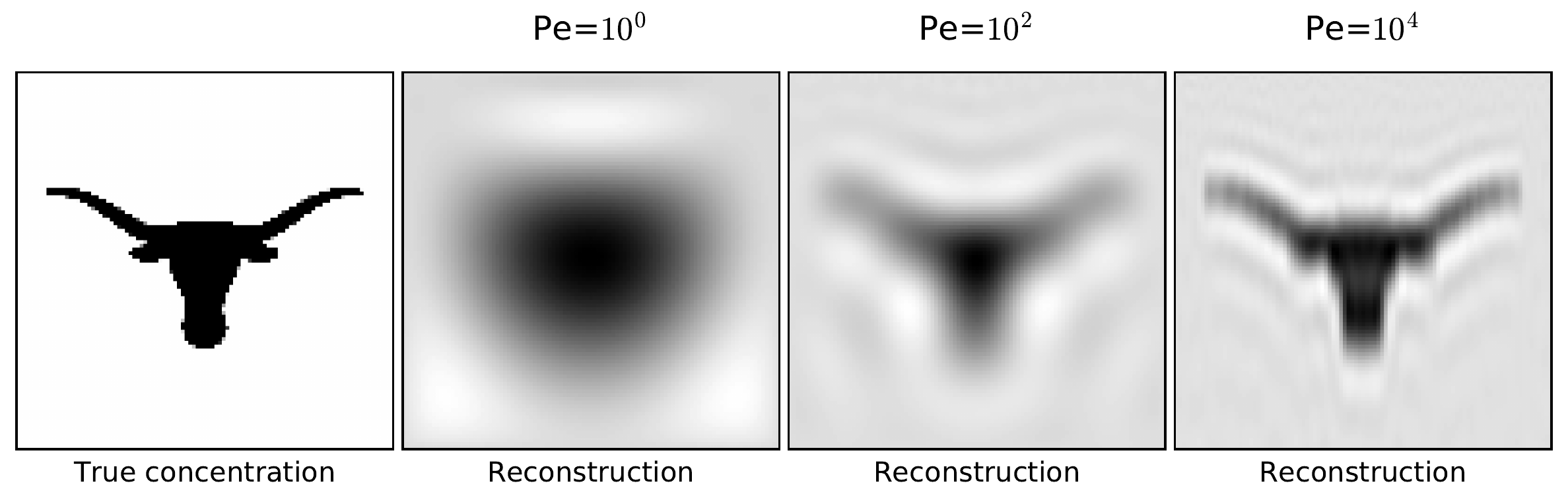}
    \caption{\textbf{Advection-diffusion inverse problem Hessian:} True initial concentration $m$ (left image) and optimal reconstructions for Peclet numbers $10^0$, $10^2$, and $10^4$ (all other images, left to right).  The regularization parameter used for the reconstruction is $\alpha=10^{-3}$, which satisfies the Morozov discrepancy principle within a $5$\% tolerance for all Peclet numbers considered.}
    \label{fig:advection_reconstructions}
\end{figure}

We use an image of the University of Texas ``Hook'em Horns'' logo as the initial concentration, $m$. The sharp edges in this image are computationally expensive to recover using existing methods. The solutions to the inverse problem for Peclet numbers in the range $10^0$ to $10^5$ are shown in Figure \ref{fig:advection_reconstructions}.

The results we present are twofold. First, we show that our product-convolution scheme can be used to approximate the discretized version of the operator
\begin{equation*}
A := \frac{d^2 J}{dm^2},
\end{equation*}
which is the Hessian of the data misfit. Second, we use the convolution approximation of $A$ to build a preconditioner for the overall Hessian,
\begin{equation*}
H := A + \frac{d^2 R}{dm^2}.
\end{equation*}
We show that the preconditioner is effective even if the Peclet number is large.

\paragraph{Preconditioning the Hessian} Since $y$ depends linearly on $m$, the solution to \eqref{eq:optimization_problem} is the solution to a linear system with $H$ as the coefficient matrix. Although this inverse problem is linear, Newton methods for solving nonlinear advection-diffusion inverse problems require solving linear systems with similar Hessians as coefficient operators. The Hessian of the regularization, $\frac{d^2 R}{dm^2}$, is a differential operator with known entries, and thus it is easy to manipulate. In contrast, $A$ is dense and its matrix entries are not directly available. We can only apply $A$ to vectors using an adjoint-based framework (see \cite{AkcelikBirosDraganescuEtAl05}). This requires solving a pair of advection-diffusion equations: a state equation of the form \eqref{eq:adv_diff_pde} forward in time, and the adjoint of \eqref{eq:adv_diff_pde} backward in time. Explicitly forming $A$ is thus prohibitively expensive: a pair of PDEs would need to be solved for every column of $A$. 

While Krylov methods can be used to solve linear systems with the Hessian as the coefficient operator in a matrix-free manner, good general purpose preconditioners have not been available (see \cite{AkcelikBirosGhattasEtAl06a} for a discussion of these issues). But now our convolution-product scheme allows us to build a good preconditioner as follows: first we form a product-convolution approximation of $A$, then convert it to $H$-matrix format, then symmetrize it, then add a small amount of identity regularization, then combine it with $\frac{d^2 R}{dm^2}$, then finally invert the combined $H$-matrix with fast $H$-matrix arithmetic. In detail, we form the following approximation to the inverse of the Hessian, which we use as a preconditioner:
\begin{equation}
\label{eq:adv_complete_preconditioner}
P^{-1} := \left((\widetilde{A} + \widetilde{A}^T)/2 + \tau \nor{A} I + \frac{d^2 R}{dm^2} \right)^{-1} \approx \left(A + \frac{d^2 R}{dm^2}\right)^{-1}.
\end{equation}
Here $\tau \nor{A}I$ is a small amount of additional regularization ($I$ is the identity matrix). We use $\tau=0.0025$. Matrix addition, scaling, and inversion in \eqref{eq:adv_complete_preconditioner} are performed with $H$-matrix arithmetic. Here, we use a fixed rank of $20$ for the low-rank approximations performed during $H$-matrix construction and arithmetic.

\paragraph{Data scalability} The Peclet number, $\mathrm{Pe}$, controls the ratio of advection to diffusion. As $\mathrm{Pe}$ increases, the rank of $A$ increases \cite{FlathEtAl11}, making the inverse problem more difficult to solve with existing methods. This increase in the rank corresponds to an increase in the informativeness of the data about the parameter in the inverse problem---eigenvectors of $A$ corresponding to large eigenvalues represent modes of the parameter that are well-informed by the data, whereas eigenvectors of $A$ corresponding to small eigenvalues represent modes of the parameter that are poorly-informed by the data (see \cite{AlgerEtAl17} for a discussion of these issues). As a result, for an approximation of $A$ to be data-scalable (perform well regardless of how informative the data are about the parameter), the cost of constructing the approximation must not grow as $\mathrm{Pe}$ increases.

\begin{figure}
	\begin{minipage}[t]{0.65\textwidth}
		\mbox{}\\[-\baselineskip]
	    \includegraphics[scale=0.60]{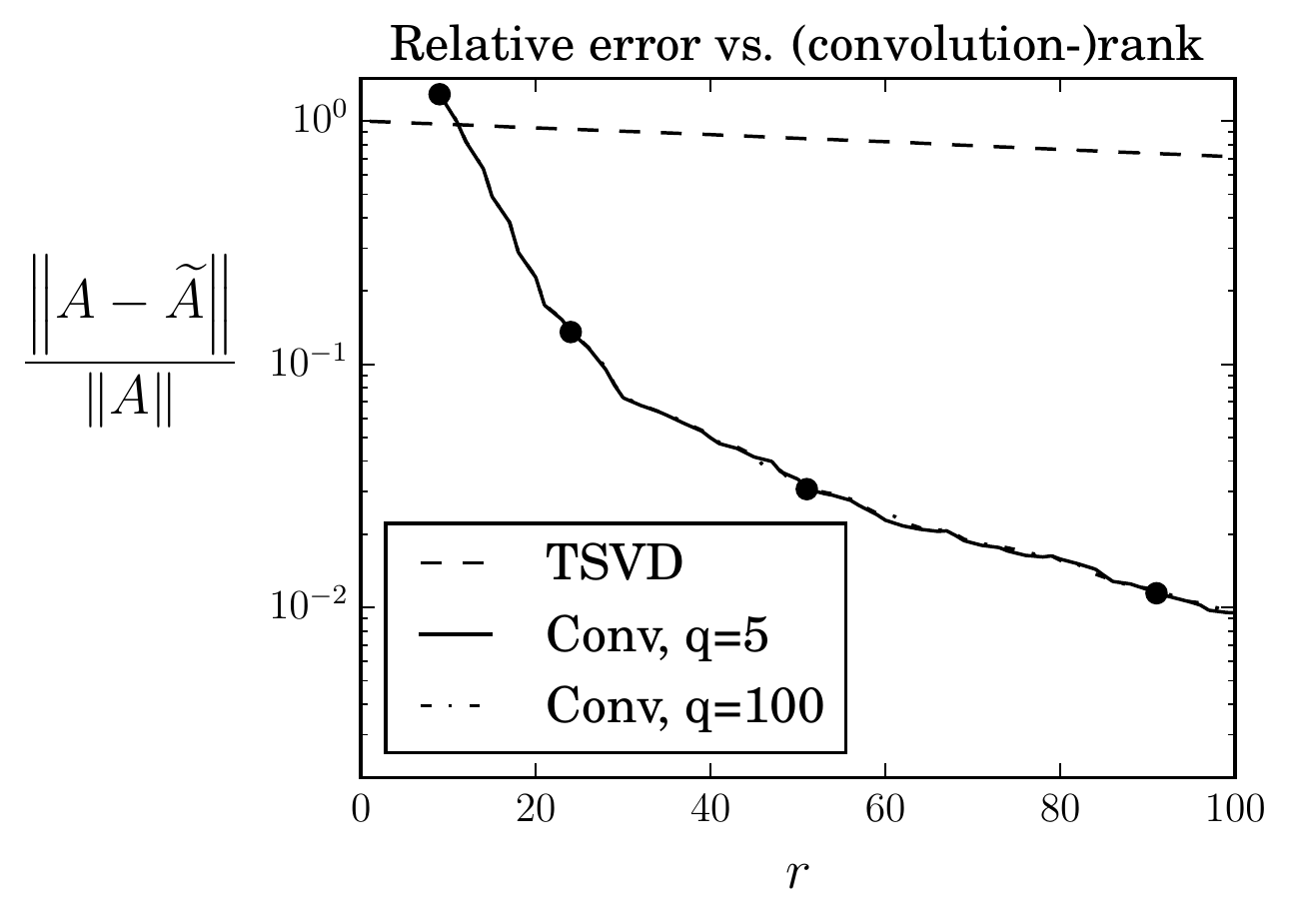}
	  \end{minipage}\hfill
	  \begin{minipage}[t]{0.35\textwidth}
		\mbox{}\\[-\baselineskip]
	    \caption{\textbf{Advection-diffusion inverse problem Hessian:} Relative error in the truncated SVD (`TSVD') low-rank approximation compared to our product-convolution approximation (`Conv') as the (convolution) rank, $r$, changes. We show convergence curves for our scheme using both $q=5$ and $q=100$ random samples for the a-posteriori error estimator. Black dots correspond to the adaptive grids visualized in Figure \ref{fig:adv_diff_adaptive_grid}.} \label{fig:adv_diff_convergence}
	  \end{minipage}
\end{figure}
\begin{figure}
\begin{subfigure}[b]{0.24\textwidth}
	\center
    \includegraphics[scale=0.2]{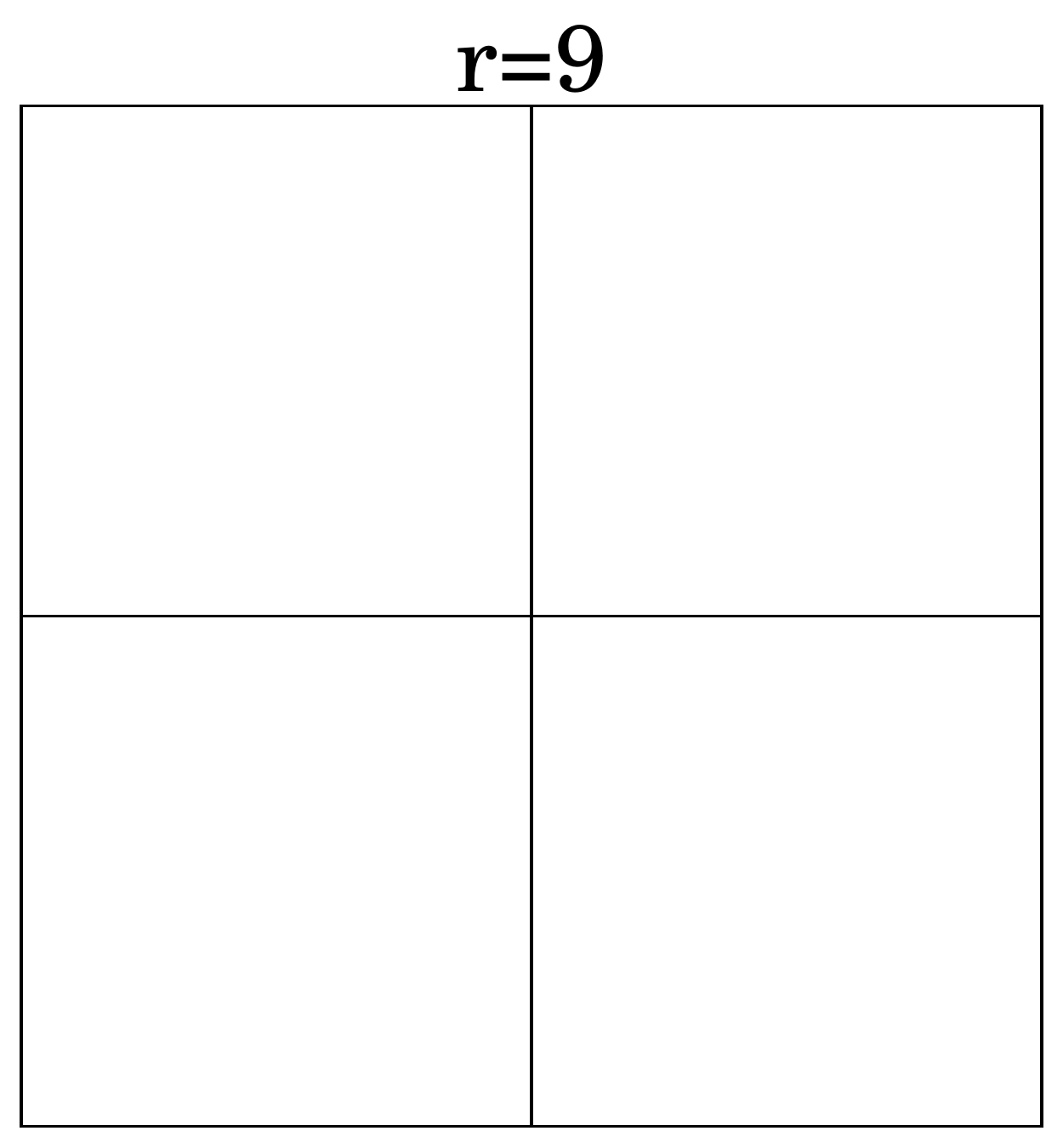}
    \label{fig:adv_diff_adaptive_grid_coarse}
\end{subfigure}
\begin{subfigure}[b]{0.24\textwidth}
	\center
    \includegraphics[scale=0.2]{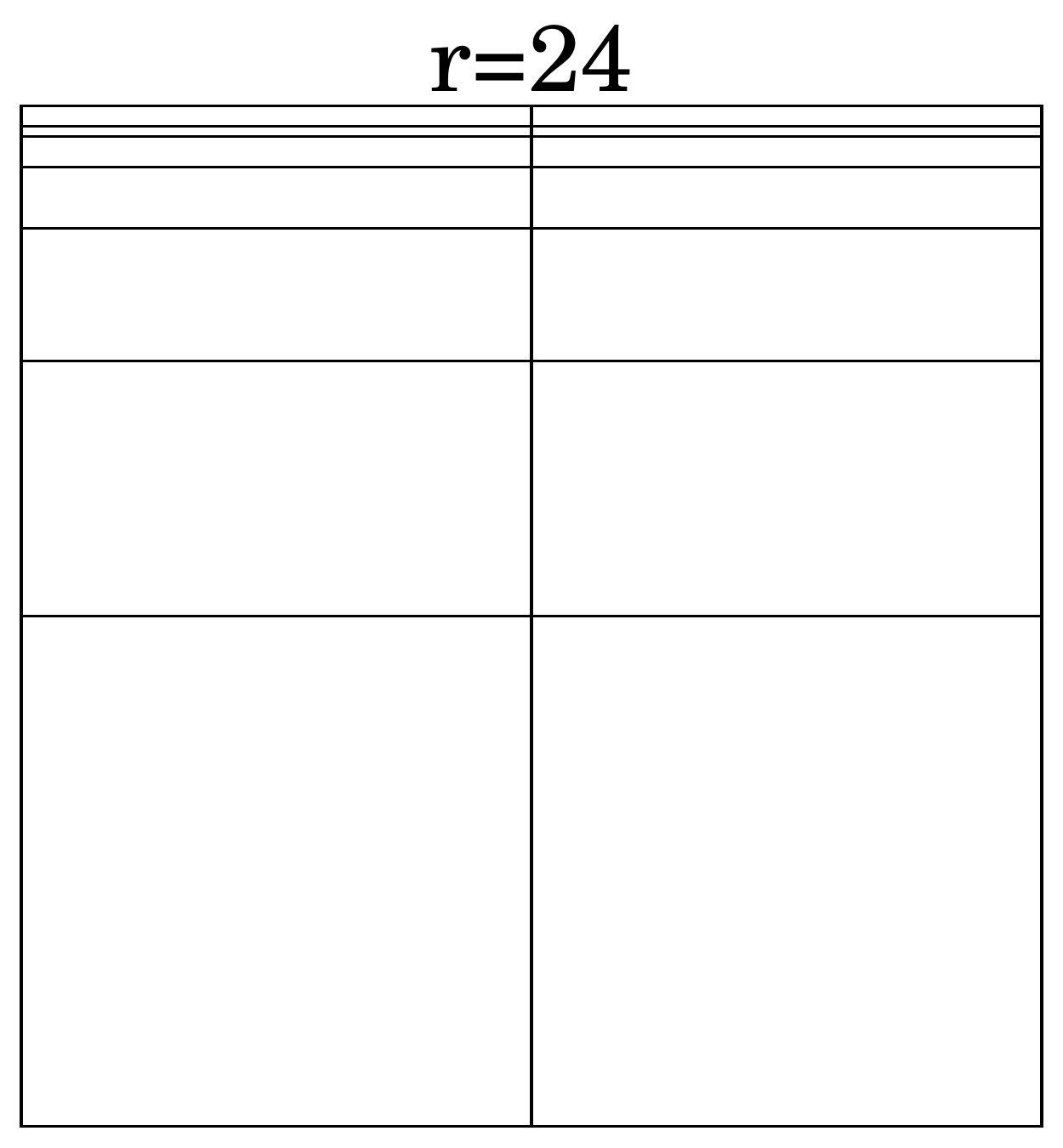}
    \label{fig:adv_diff_adaptive_grid_medium}
\end{subfigure}
\begin{subfigure}[b]{0.24\textwidth}
	\center
    \includegraphics[scale=0.2]{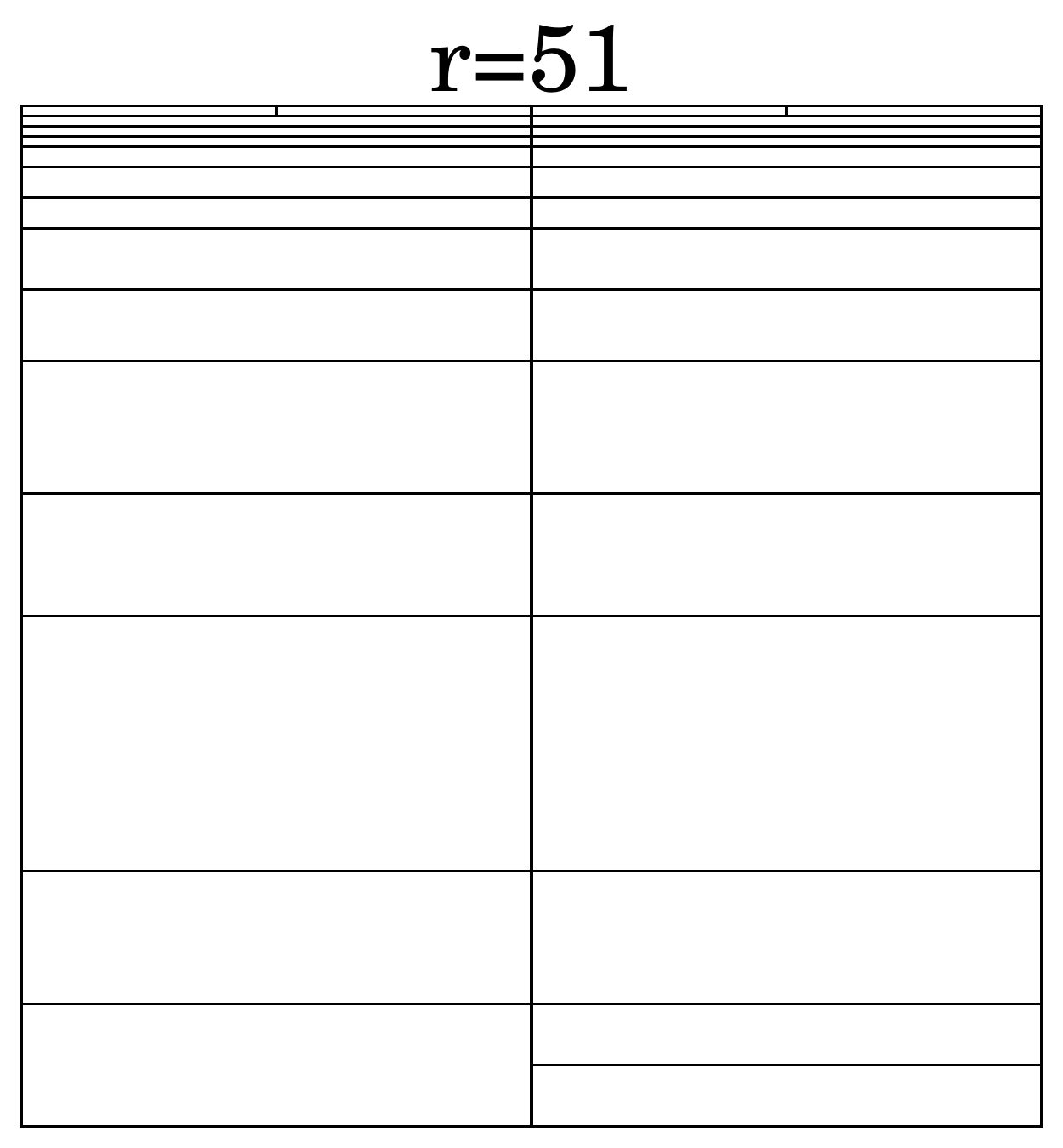}
    \label{fig:adv_diff_adaptive_grid_fine}
\end{subfigure}
\begin{subfigure}[b]{0.24\textwidth}
	\center
    \includegraphics[scale=0.2]{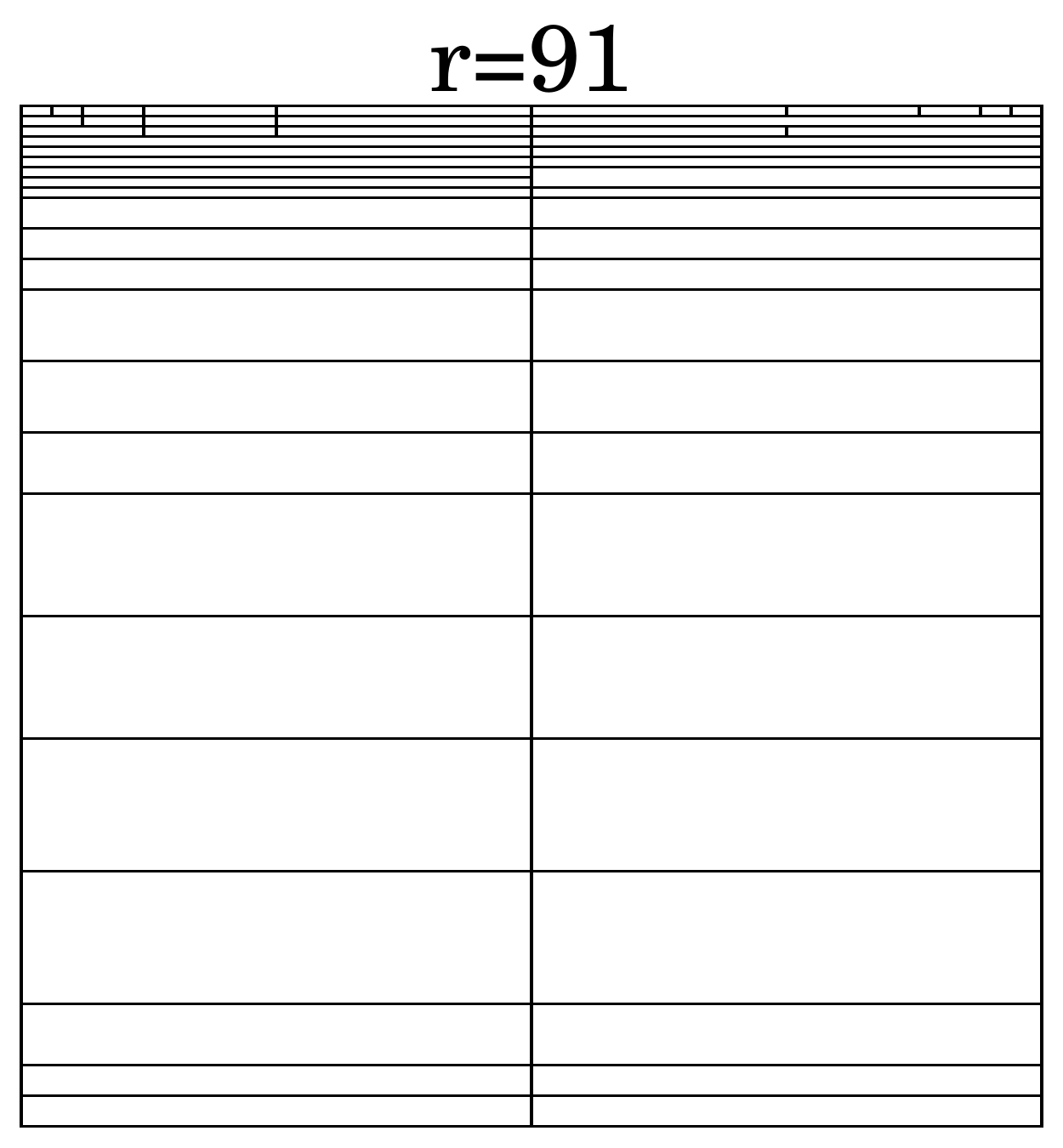}
    \label{fig:adv_diff_adaptive_grid_veryfine}
\end{subfigure}
\caption{\textbf{Advection-diffusion inverse problem Hessian:} Intermediate stages of adaptive grid refinement corresponding to black dots in Figure \ref{fig:adv_diff_convergence}.}
\label{fig:adv_diff_adaptive_grid}
\end{figure}

\paragraph{Results} Figure \ref{fig:adv_diff_convergence} compares the convergence of our product-convolution scheme (`CONV') to truncated SVD low rank approximation (`TSVD') when $\mathrm{Pe}=10^4$. Our scheme performs better than TSVD: at $r=100$ our scheme has less than $1$\% error whereas TSVD has approximately $71$\% error. Like the Poisson problem, the convergence curve for $q=5$ is almost identical to the convergence curve for $q=100$. Figure \ref{fig:adv_diff_adaptive_grid} shows the adaptive meshes from four different stages of the adaptive refinement process from Figure \ref{fig:adv_diff_convergence}. Our scheme chooses to adaptively refine in the direction of the vertical flow, prioritizing refinement near the top surface. We expect similar results would hold for inverse problems involving non-vertical, non-uniform flow if the convolution grid were aligned with the streamlines of the flow.

\begin{figure}
	\begin{minipage}[t]{0.54\textwidth}
		\mbox{}\\[-\baselineskip]
	    \includegraphics[scale=0.6]{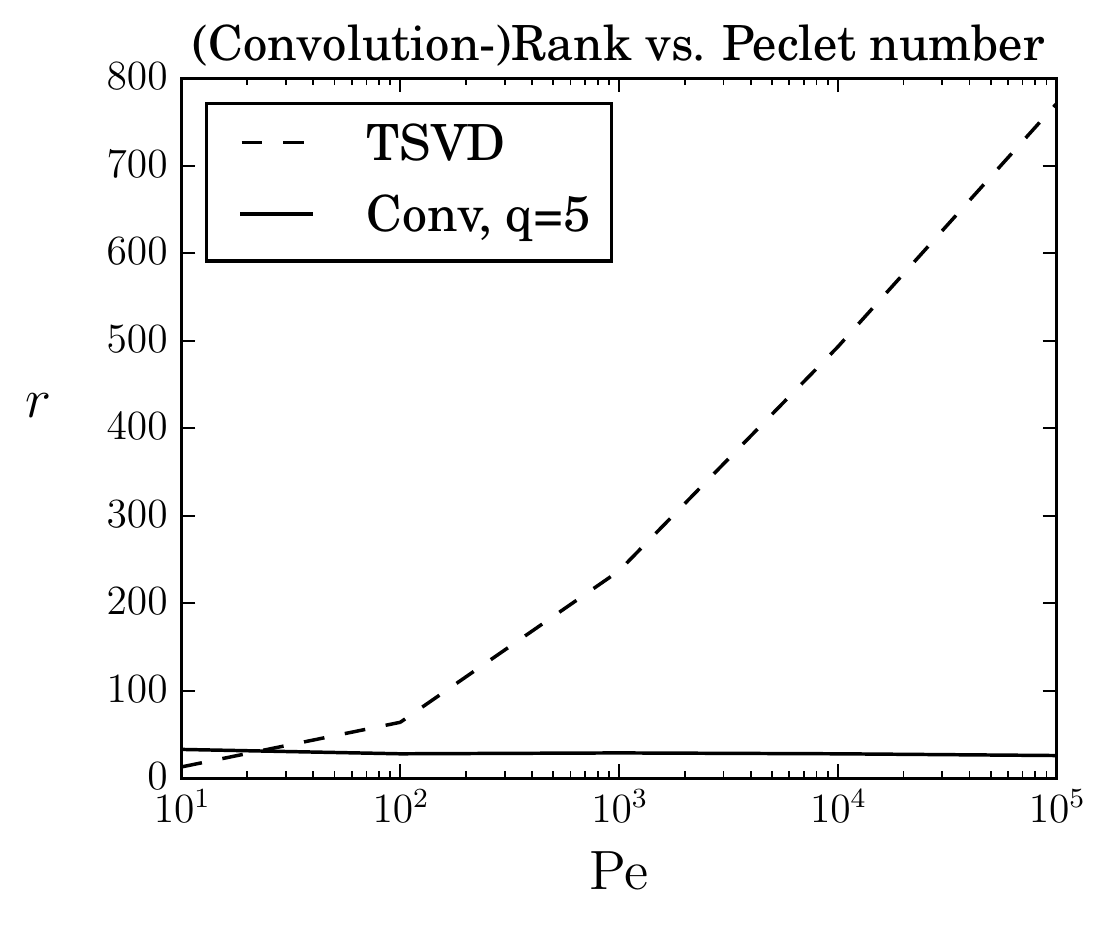}
	  \end{minipage}\hfill
	  \begin{minipage}[t]{0.43\textwidth}
		\mbox{}\\[-\baselineskip]
	    \caption{
	       \textbf{Advection-diffusion inverse problem Hessian:} The (convolution) rank, $r$, required to achieve a relative approximation error of $10$\% for a variety of Peclet numbers, $\mathrm{Pe}$. `TSVD' indicates truncated SVD low rank approximation, and 'Conv' indicates our product-convolution scheme.
	    } \label{fig:advection_diffusion_data_scalability}
	  \end{minipage}
\end{figure}

Figure \ref{fig:advection_diffusion_data_scalability} compares our scheme to TSVD for a sequence of increasing Peclet numbers, from $\mathrm{Pe}=10^1$ to $\mathrm{Pe}=10^5$. The curves show the (convolution) rank, $r$, required to achieve a relative error tolerance of $10$\% (estimated using $q=5$ random adjoint samples). Whereas the required rank for TSVD grows dramatically as $\mathrm{Pe}$ increases, the required convolution rank for our scheme remains constant.

Figure \ref{fig:advection_convergence_comparison} shows the convergence of Krylov methods for solving the Hessian linear system using GMRES with our preconditioner (`GMRES-CONV'), compared to conjugate gradient with regularization preconditioning (`CG-REG'), for $\mathrm{Pe}=10^4$. Here the product-convolution approximation is computed to a $5$\% relative error tolerance. Our preconditioner substantially outperforms regularization preconditioning, coverging rapidly even though the Peclet number is large. In Figure \ref{fig:advection_iterates_comparison}, we show intermediate reconstructions associated with $1$, $5$, and $50$ Krylov iterations, for both GMRES-CONV and CG-REG. CG-REG first reconstructs large-scale features of $m$, then medium-scale features, then small-scale features, while GMRES-CONV reconstructs features of $m$ at all scales simultaneously. Even one iteration of GMRES-CONV yields a visually reasonable reconstruction.

\begin{figure}
	\begin{minipage}[t]{0.6\textwidth}
		\mbox{}\\[-\baselineskip]
	    \includegraphics[scale=0.4]{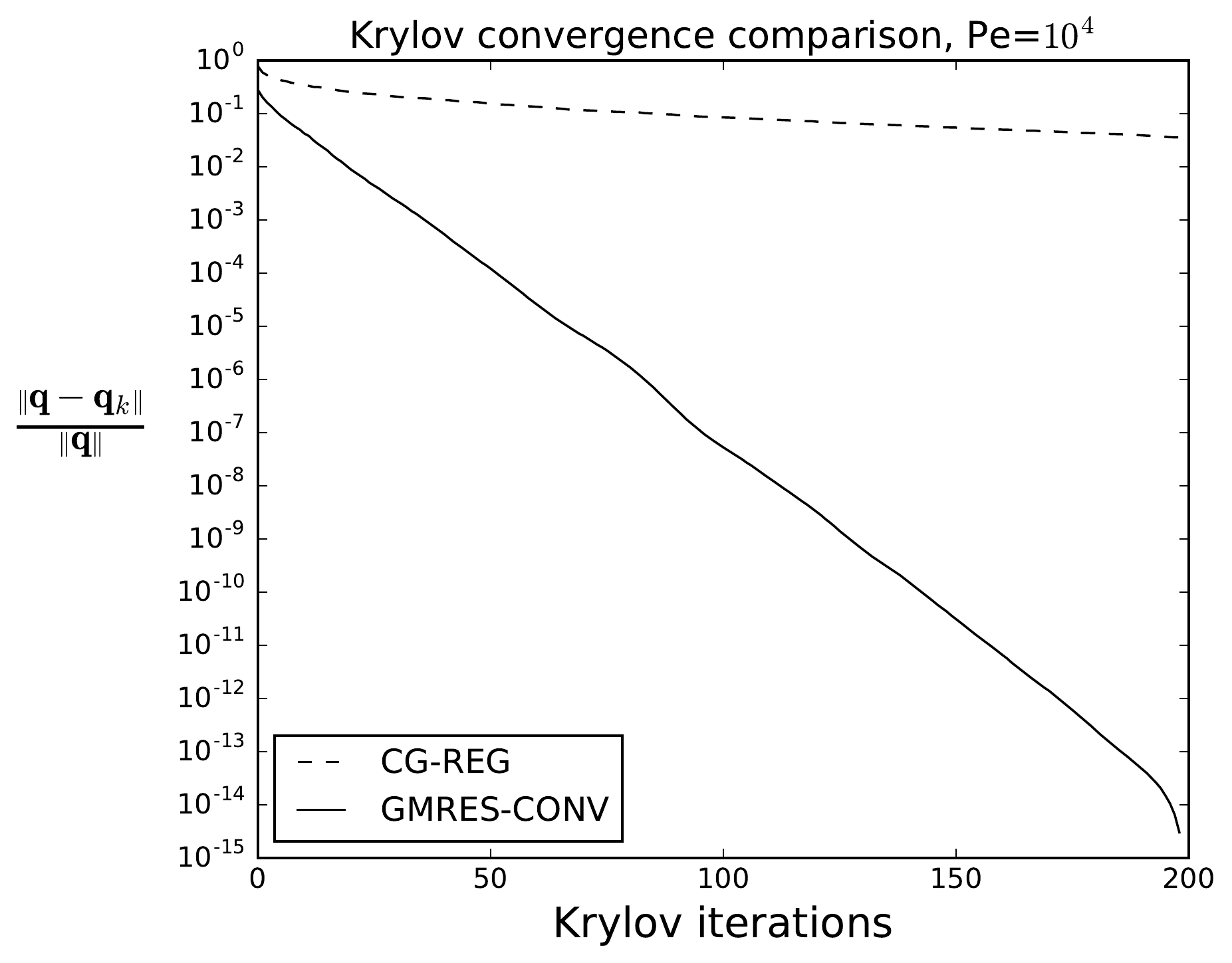}
	  \end{minipage}\hfill
	  \begin{minipage}[t]{0.39\textwidth}
		\mbox{}\\[-\baselineskip]
	    \caption{
	       \textbf{Advection-diffusion inverse problem Hessian:} Convergence of conjugate gradient with regularization preconditioning (`CG-REG'), compared to GMRES with our product-convolution preconditioner, \eqref{eq:adv_complete_preconditioner} (`GMRES-CONV'), for solving the Hessian linear system. Here $\mathrm{Pe}=10^4$, and the product-convolution approximation is accurate to $5$\% relative error.
	    } \label{fig:advection_convergence_comparison}
	  \end{minipage}
\end{figure}

\begin{figure}
	\center
    \includegraphics[scale=0.4]{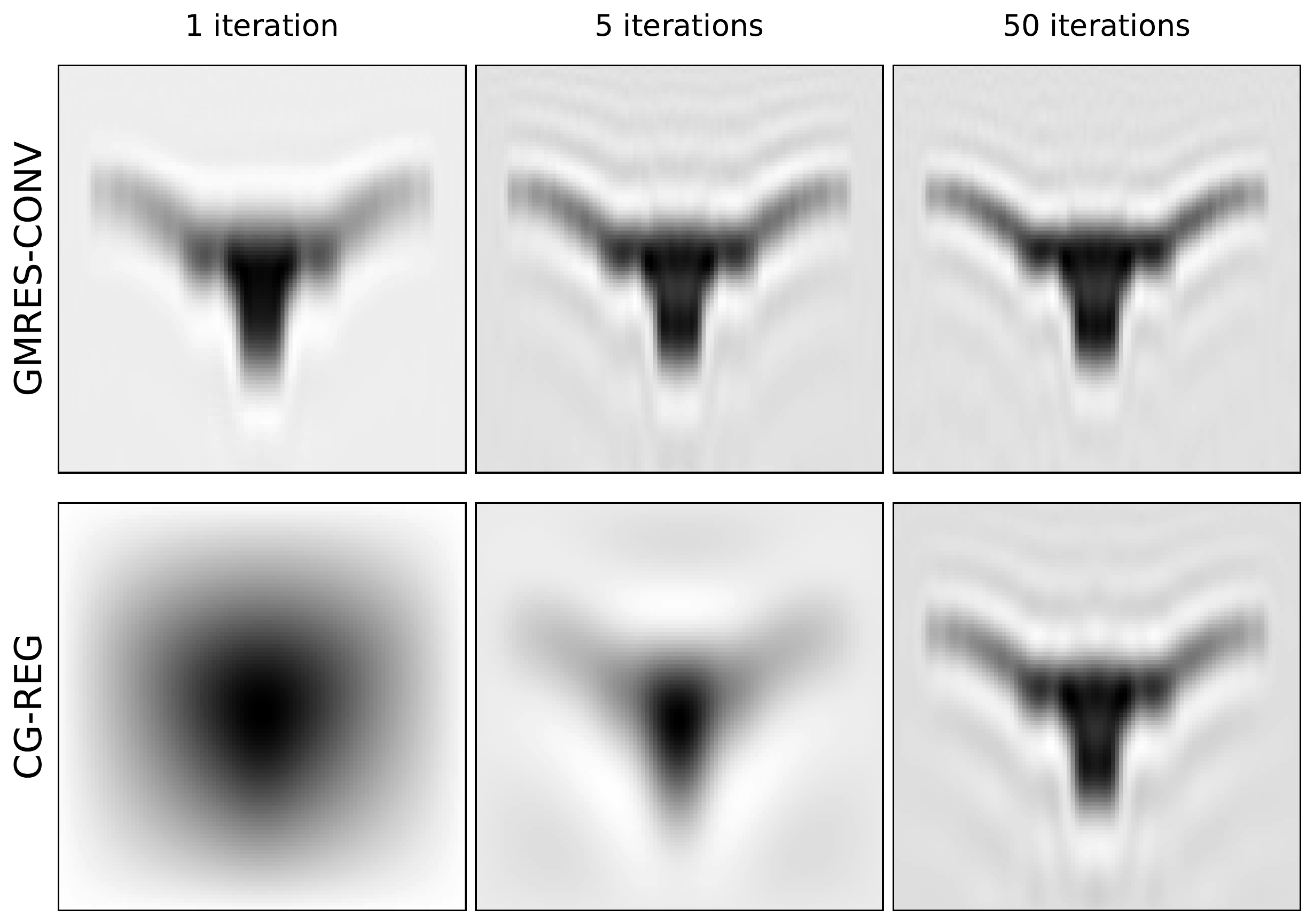}
    \caption{\textbf{Advection-diffusion inverse problem Hessian:} Comparison of parameter reconstructions associated with terminating the Krylov solver after $1$, $5$, and $50$ iterations, for both GMRES with our preconditioner (`GMRES-CONV'), and conjugate gradient with regularization preconditioning (`CG-REG'). Here $\mathrm{Pe}=10^4$, and the product-convolution approximation is accurate to $5$\% relative error.}
    \label{fig:advection_iterates_comparison}
\end{figure}

\section{Conclusions}
\label{sec:conclusion}

In this paper we presented a matrix-free adaptive grid product-convolution operator approximation scheme. The efficiency of our scheme depends on the degree to which the operator being approximated is locally translation-invariant. As a result, our scheme is well-suited for approximating or preconditioning operators that arise in Schur complement methods for solving partial differential equations (PDEs), reduced Hessians in PDE-constrained optimization and inverse problems, integral operators, covariance operators with spatially varying kernels, and Dirichlet-to-Neumann maps or other Poincaré–Steklov operators in multiphysics problems. These operators are often dense, implicitly defined, and high-rank, making them difficult to approximate with standard techniques. Our scheme is best suited to moderate accuracy requirements (say, $80$\% to $99$\% accuracy). 

Our scheme improves on existing product-convolution schemes by providing an automated method for performing adaptivity, and by addressing issues related to boundaries. Once constructed, the approximation can be manipulated efficiently and accessed in ways that the original operator cannot: matrix entries of the approximation can be computed at $O(1)$ cost, the approximation (or \emph{blocks} of the approximation) can be efficiently applied to vectors with the FFT, and the approximation can be efficiently converted to $H$-matrix format. Once in $H$-matrix format, it can be factorized, inverted, or otherwise manipulated with fast $H$-matrix arithmetic. Since our scheme is best suited to moderate accuracy requirements, the resulting $H$-matrix can be exploited to construct a good preconditioner.

We tested our scheme numerically on a spatially varying blur operator, on the non-local component of an interface Schur complement for the Poisson operator, and on the data misfit Hessian for an advection-diffusion inverse problem. We saw that our scheme outperformed existing methods in all cases. Additionally, we found that the scheme performs well even when only a handful of random samples are used to construct the a-posteriori error estimator used in the adaptive refinement procedure.

\section*{Acknowledgements}
We thank J.J. Alger, Benjamin Babcock, Joe Bishop, Andrew Potter, Georg Stadler, Umberto Villa, and Hongyu Zhu for helpful discussions. We thank the anonymous reviewers for their helpful and in-depth comments, which have helped us improve this manuscript considerably.

\FloatBarrier


\bibliographystyle{siamplain}
\bibliography{conv}
\end{document}